\newtheorem{theorem}{Theorem}[section]
\newtheorem{definition}[theorem]{Definition}
\newtheorem{conjecture}[theorem]{Conjecture}
\newtheorem{proposition}[theorem]{Proposition}
\newtheorem{lemma}[theorem]{Lemma}
\newtheorem{corollary}[theorem]{Corollary}
\begin{document}
\title{Brou\'e's Abelian Defect Group Conjecture for the Tits Group}
\author{Daniel Robbins}
\date{}
\maketitle
\begin{center}
\textbf{Abstract}\\
\end{center}
\noindent In this paper we prove that Brou\'e's abelian defect group conjecture holds for the Tits group $^2F_4(2)'$. Also we prove that under certain conditions we are able to lift derived equivalences and use this to prove Brou\'e's conjecture for the group $^2F_4(2)$.
\section{Introduction}
\indent Much of modern representation theory is concerned with the relationship between representations of a group and those of its subgroups. Brou\'e's abelian defect group conjecture attempts to understand this relationship using derived equivalences and can be stated as follows:
\begin{conjecture}[Brou\'e's Abelian Defect Group Conjecture] \label{a1}  Let $k$ be an algebraically closed field of prime characteristic $p$ and $G$ be a finite group. If $A$ is a block, of the group algebra $kG$, with an abelian\footnote[2]{The abelian condition is not well understood but it is essential as the principal block of $kSz(8)$ in characteristic $2$ has a non-abelian defect group and is not derived equivalent to it's Brauer correspondent. However, there are still some interesting cases where ``Brou\'e's Conjecture'' holds and the defect group is not abelian.}  defect group $D$, and $B$ is the Brauer correspondent of $A$, a block of the group algebra $kN_G (D)$, then $D^b$(mod-$A$) and $D^b$(mod-$B$) are equivalent as triangulated categories.   
\end{conjecture}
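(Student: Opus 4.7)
The plan is to proceed via reduction to quasi-simple groups followed by a case analysis based on the classification of finite simple groups. First, I would try to establish that it suffices to verify the statement when $G$ is quasi-simple, in the style of Sp\"ath's reduction for the inductive Alperin--McKay condition: one fixes an abelian defect group $D$, examines the structure of blocks covering and covered in a normal series for $G$, and uses Clifford-theoretic and Fong--Reynolds arguments together with the known behaviour of derived equivalences under tensor products to descend to the quasi-simple layer. The $p$-solvable case falls out separately: for $p$-solvable $G$, a block with abelian defect is nilpotent, hence source-algebra---and in particular derived---equivalent to its Brauer correspondent by the Harris--Linckelmann refinement of Puig's theorem on nilpotent blocks.

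Second, invoking CFSG, I would verify the conjecture on each family of quasi-simple groups. For symmetric and alternating groups, the conjecture is a theorem of Chuang--Rouquier: their $\mathfrak{sl}_2$-categorification yields derived equivalences from arbitrary abelian defect blocks of $kS_n$ to RoCK blocks, whose Brauer correspondents are known to be Morita equivalent to the RoCK blocks themselves. For finite groups of Lie type in non-defining characteristic, the programme of Brou\'e--Malle--Michel predicts that the derived equivalence is realised by the complex of $\ell$-adic cohomology of a suitable Deligne--Lusztig variety, with the cyclotomic Hecke algebra of the relative Weyl group acting by tilting autoequivalences. For Lie type groups in defining characteristic, abelian defect forces the defect group to be small since Sylow $p$-subgroups contain Borel unipotent radicals, reducing matters to a short list of low-rank exceptions. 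For sporadic groups (and small exceptional cases such as the Tits group handled in this paper), one proceeds individually using Okuyama's iterated tilting algorithm, Rickard's and Rouquier's criteria for lifting stable equivalences of Morita type to derived equivalences, and explicit descriptions of source algebras or decomposition matrices.

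The main obstacle is twofold. First, no complete reduction theorem to quasi-simple groups is presently available: unlike for the McKay or Alperin weight conjectures, no inductive condition has been formulated and proved whose verification for all finite simple groups would formally imply Brou\'e's conjecture, so the Clifford-theoretic step above is itself a substantial open problem rather than a routine invocation. Second, the Lie type case in non-defining characteristic hinges on the still-largely-open Brou\'e--Michel conjecture concerning the cohomology of Deligne--Lusztig varieties; until that is settled, a uniform construction of the required complex of bimodules is unavailable, and one must fall back on case-by-case constructions of the kind carried out here for $^2F_4(2)'$ and $^2F_4(2)$. Consequently, a genuine proof of the full statement is at present a long-term research programme rather than a single argument, and individual cases such as the Tits group constitute meaningful contributions toward it.
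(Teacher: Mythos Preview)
The statement you are addressing is labelled as a \emph{conjecture} in the paper, not as a theorem, and the paper contains no proof of it. The paper does not attempt to prove Brou\'e's Abelian Defect Group Conjecture in general; it establishes the conjecture only for the specific groups $^2F_4(2)'$ and $^2F_4(2)$, in each case by exhibiting explicit complexes satisfying the hypotheses of Rickard's theorem (Theorem~\ref{a2}). There is therefore no ``paper's own proof'' against which to compare your proposal.

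Your proposal is not a proof but a sketch of a possible long-term strategy, and you yourself acknowledge this in your final paragraph. The two obstacles you identify are genuine and remain open: there is at present no reduction theorem analogous to those for the McKay or Alperin weight conjectures that would reduce Brou\'e's conjecture to a statement about quasi-simple groups, and the Deligne--Lusztig approach for groups of Lie type in non-defining characteristic is conjectural. So what you have written is an accurate survey of the landscape surrounding an open problem, but it cannot be assessed as a proof because none is claimed and none exists. If your intention was to prove the statement as it appears in the paper, the essential gap is simply that the conjecture is open; if your intention was to situate the paper's contribution within a broader programme, then your summary is reasonable, and the paper's role is precisely the case-by-case verification for one sporadic-type case that you mention in your final sentence.
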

This conjecture is already known in many cases, for example when the defect group is cyclic, see \cite{rr98}, or $A$ is a principal block with defect group $C_3\times C_3$, see \cite{kkw02a}.\\
\indent In many cases, where Brou\'e's conjecture is still unknown, there is known to be a stable equivalence of Morita type between $A$ and $B$, which can be seen as a consequence of the derived equivalence as the stable module category is a canonical quotient of the derived category, see \cite{jr89a}. This and an idea of Okuyama's \cite{to97} led Rickard to prove the following in his paper \cite{jr2002}:
\begin{theorem}[Rickard's theorem] \label{a2} Let $A$ and $B$ be finite-dimensional symmetric algebras,
\begin{displaymath}
F:\textrm{mod-}A\to \textrm{mod-}B
\end{displaymath}
be an exact functor inducing a stable equivalence of Morita type and let $S_{1},...,S_{n}$ be a set of representatives for the isomorphism classes of simple $A$-modules.\\
\indent If there are objects $X_{1},...,X_{n}$ of $D^{b}$(mod-$B$) such that $X_{i}$ is stably isomorphic to $F(S_{i})$, for each $1 \leq i \leq n$, and such that the following are satisfied:
\begin{enumerate}
	\item[(a)] $\textrm{Hom}_{D^b(B)}(X_i,X_j[m])=0\indent \textrm{ for m $<$ 0}$
	\item[(b)] $\textrm{Hom}_{D^b(B)}((X_i,X_j)= \left\{ \begin{array}{ll}
0 & \textrm{if $i \not = j$}\\
k & \textrm{if $i = j$}
\end{array} \right.$
	\item[(c)] $X_1,...,X_n \textrm{ generate } D^b(\textrm{mod-}B) \textrm{ as a triangulated category}$
	\end{enumerate}
then $D$(Mod-$A$) and $D$(Mod-$B$) are equivalent as triangulated categories. $\square$
\end{theorem}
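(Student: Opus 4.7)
The plan is to deduce the derived equivalence by recognising that conditions (a)--(c) on $\{X_1,\ldots,X_n\}$ are exactly the axioms of a \emph{simple-minded collection} in $D^b(\textrm{mod-}B)$, and then combining Rickard's Morita theory for derived categories with the information packaged in the stable equivalence $F$ in order to identify the resulting algebra with $A$.

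To execute this, I would first apply Rickard's construction to the simple-minded collection $\{X_i\}$: the conditions (a) and (b) produce a bounded $t$-structure on $D^b(\textrm{mod-}B)$ whose heart $\mathcal{H}$ is a length abelian category with simples $X_1, \ldots, X_n$, and by taking minimal projective covers of the $X_i$ inside $\mathcal{H}$ (formed as certain iterated triangles in $D^b(\textrm{mod-}B)$) one assembles a tilting complex $T$ whose indecomposable summands $P_i$ satisfy $\textrm{Hom}_{D^b(B)}(P_i,X_j[m]) = k\cdot\delta_{ij}\delta_{m,0}$. By Rickard's Morita theorem for derived equivalences, $T$ induces a derived equivalence between $B$ and the algebra $C := \textrm{End}_{D^b(B)}(T)$.

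The remaining task is to identify $C$ with $A$. For this I would exploit that $F$ is a stable equivalence \emph{of Morita type}: such functors, being induced by bimodules that are projective on each side, preserve $\textrm{Hom}$-sets modulo morphisms factoring through projectives. Since $X_i$ is stably isomorphic to $F(S_i)$ by hypothesis, the graded algebra $\bigoplus_{i,j,m}\textrm{Hom}_{D^b(B)}(X_i,X_j[m])$ can be compared with $\bigoplus_{i,j,m}\textrm{Ext}^m_A(S_i,S_j)$, and the symmetric hypothesis on $A$ and $B$ (projective equals injective, Nakayama functor trivial up to a shift) allows the projective-summand ambiguities to be absorbed, forcing $C \cong A$.

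The main obstacle is precisely this interplay between the derived and the stable pictures: condition (a) provides vanishing of \emph{derived} negative $\textrm{Ext}$s in $D^b(\textrm{mod-}B)$, whereas the hypothesis $X_i \simeq F(S_i)$ only matches data in the stable category $\underline{\textrm{mod}}\textrm{-}B$. Reconciling these --- and verifying that the tilting complex $T$ really has the $X_i$ as its simple tops rather than up to projective corrections --- is where the technical heart of the argument lies, and it is here that the symmetric and Morita-type hypotheses are indispensable. Once $C \cong A$ has been established, condition (c) ensures that $T$ is a compact generator, and the derived equivalence extends from $D^b(\textrm{mod-}B)$ to the unbounded $D(\textrm{Mod-}B)$ by standard compactly-generated arguments.
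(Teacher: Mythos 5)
First, a caveat: the paper itself offers no proof of this statement --- it is Rickard's theorem, quoted from \cite{jr2002} with a $\square$, and everything in \S 3--\S 5 merely applies it --- so there is no internal argument to compare yours against; what follows measures your sketch against what an actual proof requires.

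Your architecture (read (a)--(c) as saying $X_1,\dots,X_n$ is a simple-minded collection, build from it a tilting complex $T$ for $B$ whose summands $P_i$ satisfy $\mathrm{Hom}_{D^b(B)}(P_i,X_j[m])=k\,\delta_{ij}\delta_{m,0}$, then identify $\mathrm{End}_{D^b(B)}(T)$ with $A$) is the right shape, although two points are glossed: the bounded t-structure with heart of finite length needs condition (c) and a genuine construction, not just (a) and (b), and in Rickard's setting the $P_i$ are produced by an explicit inductive/homotopy-limit construction exploiting the symmetry of $B$ rather than by invoking projective covers in a heart. The genuine gap, however, is the step you yourself flag as the crux: the identification $C:=\mathrm{End}_{D^b(B)}(T)\cong A$. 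Asserting that the symmetric and Morita-type hypotheses let ``projective-summand ambiguities be absorbed'' while comparing $\bigoplus_{i,j,m}\mathrm{Hom}_{D^b(B)}(X_i,X_j[m])$ with $\bigoplus_{i,j,m}\mathrm{Ext}^m_A(S_i,S_j)$ is not an argument, and as stated it cannot be made one: the hypothesis $X_i\simeq F(S_i)$ lives in the stable category, so it only controls Hom modulo morphisms factoring through projectives (and gives nothing in negative degrees), and even a full isomorphism of the Ext-algebras of the simples does not by itself recover the algebra $A$ (one would need higher/dg structure or some other input). The standard way to close this gap --- and the one available with exactly your hypotheses --- is different in kind: under the derived equivalence $D^b(\mathrm{mod}\textrm{-}B)\simeq D^b(\mathrm{mod}\textrm{-}C)$ induced by $T$, the objects $X_i$ correspond to the simple $C$-modules; composing $F$ with the induced stable equivalence between $B$ and $C$ therefore yields a stable equivalence of Morita type from $\mathrm{mod}\textrm{-}A$ to $\mathrm{mod}\textrm{-}C$ taking simples to simples, and Linckelmann's theorem (for self-injective algebras such an equivalence is, up to trivial summands, a Morita equivalence) then gives $A$ Morita equivalent to $C$, hence $A$ and $B$ derived equivalent. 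Without that theorem or a substitute for it, your identification step does not go through; the final passage from $D^b(\mathrm{mod})$ to $D(\mathrm{Mod})$ is fine and standard.
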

In \S 3 we use this theorem to prove that Brou\'e's conjecture holds for the Tits group.\\
\indent Following Al-Nofayee \cite{a} we make the following definition concerning the objects in Rickard's theorem:
\begin{definition} \label{a3} Given an algebra $A$, we say the objects $X_1,..., X_n$ of $D^b(\textrm{mod-}A)$ are a cohomologically schurian set of generators if they satisfy the following conditions:
\begin{enumerate}
	\item[(a)] $\textnormal{Hom}_{D^b(A)}(X_i,X_j[m])=0\ \textrm{for m $<$ 0}$
	\item[(b)] $\textnormal{Hom}_{D^b(A)}(X_i,X_j)= \left\{ \begin{array}{ll}
0 & \textrm{if $i \not = j$}\\
k & \textrm{if $i = j$}
\end{array} \right.$
	\item[(c)] $X_1,...,X_n \textrm{ generate } D^b(\textrm{mod-}A) \textrm{ as a triangulated category}$
\end{enumerate}
\end{definition}
\indent If we take $k$ to be a group of characteristic $p$, $\widetilde{G}$ to be a group with a normal subgroup $G$, $\widetilde{N}$ to be a group with a normal subgroup $N$, $H=\widetilde{G}/G\cong \widetilde{N}/N$, $A$ to be a block of $kG$ and $B$ to be a block of $kN$ both stable under the action of $H$, set $\widetilde{A}=A\uparrow ^{\widetilde{G}}$ and $\widetilde{B}=B\uparrow ^{\widetilde{N}}$, note that $\widetilde{A}$ and $\widetilde{B}$ are blocks of $k\widetilde{G}$ and $k\widetilde{N}$ respectively, and list the simple $A$-modules as $s_1,...,s_m,s_{11},...,s_{1k_1},s_{21},...,s_{2k_2},...,s_{r1},\\
,...,s_{rk_r}$ such that $H$ fixes $s_1,...,s_m$ and for each $1\leq i\leq r$ permutes $s_{i1},...,s_{ik_i}$, then in \S 4 we prove that:
\begin{theorem} \label{a4} Let $\widetilde{F}:\overline{mod}-\widetilde{A}\rightarrow \overline{mod}-\widetilde{B}$ and $F:\overline{mod}-A\rightarrow \overline{mod}-B$ are stable equivalences of Morita type such that 
\begin{equation}
F(-)\uparrow ^{\widetilde{N}}\cong \widetilde{F}(-\uparrow ^{\widetilde{G}}),
\end{equation}
and $X_1,...,X_m,X_{11},...,X_{1k_1},...,X_{r1},...,X_{rk_r}\in D^b(\textnormal{mod}-B)$ is a cohomologically schurian set of generators such that $H$ permutes $X_{i1},...,X_{ik_i}$ for $1\leq i\leq r$ and fixes $X_1,...,X_m$, $X_{ij}\cong F(s_{ij})$ for $1\leq i\leq r$ and $1\leq j\leq k_r$, and $X_i\cong F(s_i)$ for $1\leq i\leq m$. Then $\widetilde{A}$ and $\widetilde{B}$ are derived equivalent. $\square$
\end{theorem}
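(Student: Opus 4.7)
The plan is to apply Rickard's theorem (Theorem~\ref{a2}) to $\widetilde{F}$, for which I must produce a cohomologically schurian set of generators of $D^b(\textrm{mod-}\widetilde{B})$ stably isomorphic to the image under $\widetilde{F}$ of a complete set of simple $\widetilde{A}$-modules. By Clifford theory the simple $\widetilde{A}$-modules sit above $H$-orbits of simple $A$-modules: for each orbit $\{s_{i1},\ldots,s_{ik_i}\}$ the simples of $\widetilde{A}$ covering it are parametrised by irreducible projective representations of the $H$-stabiliser of $s_{i1}$, and for each fixed simple $s_j$ the simples of $\widetilde{A}$ above $s_j$ are parametrised by irreducible $\alpha_j$-projective representations of $H$, where $\alpha_j\in H^2(H,k^\times)$ measures the obstruction to lifting the $H$-action on $s_j$ to $\widetilde{A}$.

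For each orbit $i$ I would set $\widetilde{X}_i:=X_{i1}\uparrow^{\widetilde{N}}$; the compatibility (1) together with $X_{i1}\cong F(s_{i1})$ forces $\widetilde{X}_i$ to be stably isomorphic to $\widetilde{F}(s_{i1}\uparrow^{\widetilde{A}})$, which (after orbit-stabiliser decomposition) is the correct object on the $\widetilde{A}$-side. For each fixed simple $X_j$, condition (b) of Definition~\ref{a3} forces $\textrm{End}_{D^b(B)}(X_j)=k$, so the isomorphisms $X_j\cong X_j^h$ are determined up to scalars and assemble into a $2$-cocycle $\beta_j\in H^2(H,k^\times)$. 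A key step is to show, using (1), that $\beta_j=\alpha_j$; then for each irreducible $\alpha_j$-projective representation $\rho$ of $H$ I can build $\widetilde{X}_{j,\rho}$ as the $\rho$-isotypical component of $X_j\uparrow^{\widetilde{N}}$ equipped with its induced $H$-equivariant structure, and check via (1) that it is stably isomorphic to $\widetilde{F}$ of the corresponding simple $\widetilde{A}$-module.

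The three cohomologically schurian conditions for the family $\{\widetilde{X}_i\}\cup\{\widetilde{X}_{j,\rho}\}$ then follow from the Frobenius reciprocity isomorphism
\begin{equation*}
\textrm{Hom}_{D^b(\widetilde{B})}(Y\uparrow^{\widetilde{N}},Z\uparrow^{\widetilde{N}}[m])\cong\bigoplus_{h\in H}\textrm{Hom}_{D^b(B)}(Y,Z^h[m]),
\end{equation*}
combined with conditions (a) and (b) for the $X$'s and with isotype separation on the fixed-simple part; the generation condition (c) follows because any object of $D^b(\textrm{mod-}\widetilde{B})$ that is $\textrm{Hom}$-perpendicular to every $\widetilde{X}$ has its $N$-restriction $\textrm{Hom}$-perpendicular to the generating family $\{X_i, X_{ij}\}$ for $D^b(\textrm{mod-}B)$ and therefore vanishes. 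Theorem~\ref{a2} applied to $\widetilde{F}$ then delivers the derived equivalence between $\widetilde{A}$ and $\widetilde{B}$.

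The main obstacle is the identification $\beta_j=\alpha_j$: one cocycle is Clifford-theoretic on the $\widetilde{A}$-side, the other is a scalar ambiguity in the derived category on the $\widetilde{B}$-side, and matching them requires tracing the compatibility (1) carefully through the isomorphisms $X_j\cong F(s_j)$ and their $H$-twists. Once this is settled, the construction of the $\widetilde{X}_{j,\rho}$ as summands of an induced complex and the verification of Rickard's hypotheses are essentially routine adjunction arguments.
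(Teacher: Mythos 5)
Your overall strategy is the same as the paper's: induce the given generators to $\widetilde{N}$, decompose the induced complexes, check Rickard's conditions via adjunction/Mackey, and apply Theorem~\ref{a2}. But the proposal has two genuine gaps at exactly the point where the work lies. First, the step you yourself flag as the main obstacle --- proving that the derived-category cocycle $\beta_j$ attached to the isomorphisms $X_j\cong X_j^h$ agrees with the Clifford cocycle $\alpha_j$ --- is left unproved, and it is both harder and stronger than what is needed. The paper avoids it entirely: its Lemma~\ref{a12} shows that for the induced objects the natural map $\mathrm{Hom}_{D^b(\widetilde{B})}(X_i\uparrow^{\widetilde{N}},X_j\uparrow^{\widetilde{N}})\to \mathrm{Hom}_{D^b(\widetilde{B})/K^b(P_{\widetilde{B}})}(X_i\uparrow^{\widetilde{N}},X_j\uparrow^{\widetilde{N}})$ is an isomorphism, so the compatibility (1) together with the fact that $s_t\uparrow^{\widetilde{G}}$ is semisimple (Proposition~\ref{a10}, the linkage hypothesis) transports the whole endomorphism algebra: $\mathrm{End}_{D^b(\widetilde{B})}(X_t\uparrow^{\widetilde{N}})\cong \mathrm{End}_{\widetilde{A}}(s_t\uparrow^{\widetilde{G}})\cong M_{n_1}(k)\oplus\cdots\oplus M_{n_{q_t}}(k)$. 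Splitting these idempotents in $D^b(\widetilde{B})$ gives summands $Y_i^t$ with $\mathrm{End}=k$ and no Homs between distinct ones, and the stable identification $Y_i^t\cong\widetilde{F}(S_{ti})$ (resp.\ $\widetilde{F}(\widetilde{S}_{ti})$) is then just a Krull--Schmidt matching of the two decompositions of $X_t\uparrow^{\widetilde{N}}\cong\widetilde{F}(s_t\uparrow^{\widetilde{G}})$ in the stable category --- no labelled, component-by-component identification via projective representations is required. Note also that what your route would actually deliver is an algebra isomorphism between the twisted group algebras $k_{\beta_j}H$ and $k_{\alpha_j}H$, which does not in general force $\beta_j=\alpha_j$ in $H^2(H,k^\times)$, so the precise statement you propose to prove may not even be the right intermediate claim.

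Second, your treatment of the non-fixed simples is too coarse. Taking the single object $\widetilde{X}_i:=X_{i1}\uparrow^{\widetilde{N}}$ for each orbit only works when the $H$-stabiliser of $s_{i1}$ is trivial; otherwise $s_{i1}\uparrow^{\widetilde{G}}$ is semisimple but not simple, $\mathrm{End}_{D^b(\widetilde{B})}(X_{i1}\uparrow^{\widetilde{N}})$ is strictly larger than $k$, and condition (b) of Theorem~\ref{a2} fails. These induced complexes must also be decomposed (with multiplicities, since $X_{ij}\uparrow^{\widetilde{N}}\cong X_{i1}\uparrow^{\widetilde{N}}$), which is why the paper runs the endomorphism-ring argument uniformly over all representatives $t\in\{1,\ldots,m,11,\ldots,r1\}$ rather than separating the fixed and non-fixed cases. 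Your perpendicularity argument for generation is workable (it needs that $Y$ is a summand of $Y\downarrow_N\uparrow^{\widetilde{N}}$ since $H$ is a $p'$-group), but the paper's Lemma~\ref{b2} gets thick generation more directly from exactness of induction and the linkage of $\widetilde{B}$ with $B$.
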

\indent Finally, in \S 5, we combine the results of \S 3 and \S 4 to prove that Brou\'e's conjecture holds for the group $^2F_4(2)$ and in the appendix we give the Loewy layers of the modules required in \S 3 and \S 5.\\
\indent When we refer to programmes we are refering to the computer package MAGMA \cite{magma}.
\section{Notation}
\indent Throughout this paper we use the following notation. Let $A$ be a ring, we denote by $1_A$, Z($A$) and $A^{\times}$ the unit element, the centre and the set of units of $A$ respectively. The category of left $A$-modules is denoted by mod-$A$ and the category of right modules is denoted $A$-mod, unless otherwise stated all $A$-modules are assumed to be left $A$-modules. Also, we denote the category of projective $A$-modules by Proj-$A$ and the category of finitely-generated projective $A$-modules by $P_A$.\\
\indent We always take $G$ to be a group, $p$ to be a prime number and ($\mathcal{O},K,k$) to be a $p$-modular splitting system for all subgroups of $G$, that is, $\mathcal{O}$ is a complete discrete valuation ring of rank one with quotient field $K$ of characteristic $0$ and residue field $k=\mathcal{O}/$rad($\mathcal{O}$) of characteristic $p$ such that $K$ and $k$ are splitting fields for all subgroups of $G$. When we talk of an $\mathcal{O}G$-module we mean a finitely generated right $\mathcal{O}G$-module which is free as an $\mathcal{O}$-module. Given an $\mathcal{O}G$-module $M$ we have a $kG$-module $M/$[$M$.rad($\mathcal{O}$)], we often abuse notation and also denote this by $M$.\\
\indent Our complexes will be \emph{cochain} complexes, so the differentials will have degree 1 and if 
\begin{displaymath}
X=\cdots \rightarrow X^{i-1}\stackrel{d^{i-1}}{\rightarrow}X^i\stackrel{d^i}{\rightarrow}X^{i+1}\rightarrow \cdots
\end{displaymath}
is a cochain complex, then $X[m]$ denotes $X$ shifted $m$ places to the left, that is $X[m]^i=X^{i+m}$ and $d^i[m]=(-1)^md^{i+m}$.\\
\indent If $A$ is an additive category then $K(A)$ will be the chain homotopy category of cochain complexes over $A$, $K^-(A)$ will be the full subcategory consisting of complexes $X$ that are bounded above, i.e. $X^i=0$ for $i\gg 0$, and $K^b(A)$ will be the full subcategory of bounded complexes, i.e. complexes $X$ with $X^i=0$ for all but finitely many $i$.\\
\indent If $A$ is an abelian category then $D(A)$ will be the derived category of cochain complexes over $A$, and $D^-(A)$ and $D^b(A)$ will be the full subcategories of complexes that are bounded above and bounded respectively.\\
\indent For a natural number $n$, $C_n$ denotes the cyclic group of order $n$ and for a subgroup $E$ of Aut($G$), $G\rtimes E$ denotes the semi-direct product.\\
\section{Brou\'e's Conjecture for the Tits Group}
\indent In this section we take $G$ to be the simple group $^2F_4(2)'$, which is the derived subgroup of $^2F_4(2)$ and is often referred to as the Tits group, its order is $17971200=2^{11}3^35^213$. We also take $k$ to be an algebraically closed field of characteristic $p$. We prove that Brou\'e's conjecture holds for $G$.
\begin{theorem} It is enough to prove the conjecture for the principal block in characteristic $p=5$.
\end{theorem}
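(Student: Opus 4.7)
The plan is to take each prime divisor of $|G|=2^{11}\cdot 3^3\cdot 5^2\cdot 13$ in turn and argue that, apart from the principal $5$-block, every block of $kG$ with abelian defect group is already handled by known results.

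First I would dispose of $p=13$: the Sylow $13$-subgroup of $G$ is cyclic of order $13$, hence every $13$-block has cyclic or trivial defect, and Brou\'e's conjecture follows from Rickard's theorem for blocks of cyclic defect groups \cite{rr98}.

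Next I would treat the primes $p=2$ and $p=3$. The Sylow $2$-subgroup (of order $2^{11}$) and the Sylow $3$-subgroup (the extraspecial group of order $27$) are both non-abelian, so the principal $2$- and $3$-blocks fall outside the hypothesis of Conjecture~\ref{a1}. Using MAGMA I would enumerate the remaining $2$- and $3$-blocks of $kG$ and read off their defect groups, verifying that each non-principal block with abelian defect has in fact cyclic or trivial defect; these are then covered by \cite{rr98}, or are simple matrix algebras (defect zero) for which the statement is immediate.

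Finally, for $p=5$, the Sylow subgroup has order $25=p^2$, hence is abelian, and is the defect group of the principal block. A MAGMA computation of the central characters modulo $5$ would then show that every non-principal $5$-block has defect at most $5$, so cyclic or zero, and again lies in the scope of \cite{rr98}. The only case that remains is the principal $5$-block, which is the subject of the subsequent subsections.

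The whole argument is really a matter of block bookkeeping; the main obstacle is checking, via MAGMA, that no non-principal $2$-, $3$-, or $5$-block has abelian non-cyclic defect. In particular one must verify that no $3$-block has defect group $C_3\times C_3$ (the only abelian, non-cyclic subgroup of the Sylow $3$-subgroup), since such a block would not be immediately covered by existing work. Once this defect-group data has been tabulated the theorem is essentially a one-line conclusion.
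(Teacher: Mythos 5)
Your proposal is correct and follows essentially the same route as the paper: a computational check (the paper uses GAP rather than MAGMA) showing that non-principal blocks pose no problem — in fact the paper finds they all have defect zero — followed by the observation that the Sylow $2$- and $3$-subgroups are non-abelian, the Sylow $13$-subgroup is cyclic and covered by \cite{rr98}, leaving only the principal $5$-block with defect group $C_5\times C_5$. Your extra care about possible cyclic-defect non-principal blocks is harmless but unnecessary once the computation shows the defect groups are trivial.
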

\begin{proof} Using GAP\cite{gap} we see that non-principal blocks have trivial defect groups, so we only have to consider principal blocks. Sylow $2$-subgroups and Sylow $3$-subgroups are not abelian so there is nothing to prove. A Sylow $13$-subgroup is cyclic and Brou\'e's conjecture has been proven for cyclic defect groups in \cite{rr98}. The only remaining case to check is characteristic $5$ where we do have an abelian Sylow $5$-subgroup $C_5\times C_5$.
\end{proof}
\indent We now take $k$ to be an algebraically closed field of characteristic $5$. The principal block, $A=ekG$, of $kG$ has elementary abelian defect group $P$ of order $25$, we set $N=N_G(P)$, $B=fkG$ to be the principal block of $kN$ and we prove that Brou\'e's conjecture holds for $A$. Note that we are taking $e$ and $f$ to be the block idempotents of $A$ and $B$ respectively.\\
\indent There are $14$ simple $kG$-modules in $A$ which we shall denote by $S_1,..., S_{14}$. To prove our result we shall find $14$ complexes $X_1,..., X_{14}$ which satisfy the conditions of Rickard's theorem \ref{a2}. First we have to find a stable equivalence, in this case the equivalence is simply restriction since $C_G(Q)=C_N(Q)$ for every non-trivial subgroup $Q\leq P$. Therefore, if we take $C$ to be $ekGf$ considered as a complex concentrated in degree zero, then $C$ induces a stable equivalence between $A$ and $B$. Moreover if we take $RG_i$ to be the Green correspondant of $S_i$ then:
\begin{displaymath}
S_i\otimes C:\ \cdots \rightarrow 0\rightarrow RG_i\oplus (projs)\rightarrow \cdots
\end{displaymath}
where the non-zero term is in degree zero.\\
\indent We now want to construct complexes $X_1,...,X_{14}$ such that $X_i$ is stably isomorphic to $S_i\otimes _AC$ and which satisfy the conditions (a)-(c) of Rickard's theorem. To define the $X_i$'s we shall describe the complex $P_i$ in $K^b(P_B)$ such that we have a triangle
\begin{displaymath}
P_i\rightarrow S\otimes _AC\rightarrow X_i\rightarrow P[1]
\end{displaymath}
\indent First we found an isotypy between $A$ and $B$ and hence found that from this isotypy we have the perfect isometry:
\begin{displaymath}
\left( \begin{array}{ccc}
\chi_1 \\
\chi_{26a} \\
\chi_{26b} \\
\chi_{27a} \\
\chi_{27b} \\
\chi_{78} \\
\chi_{351a} \\
\chi_{351b} \\
\chi_{351c} \\
\chi_{624a} \\
\chi_{624b} \\
\chi_{702a} \\
\chi_{702b} \\
\chi_{1728} \\
\chi_{2048a} \\
\chi_{2048b} \\
\end{array} \right)
\longrightarrow
\left( \begin{array}{ccc}
\phi_{1a} \\
\phi_{1c} \\
\phi_{1e} \\
\phi_{2f} \\
\phi_{2e} \\
\phi_{3a} \\
-\phi_{24b} \\
\phi_{1b} \\
-\phi_{24a} \\
-\phi_{1d} \\
-\phi_{1f} \\
\phi_{2a} \\
\phi_{2d} \\
\phi_{3b} \\
-\phi_{2b} \\
-\phi_{2c} \\
\end{array} \right)
\end{displaymath}
where the subscripts denote the degree of the characters. This perfect isometry is suspected to be the map induced by the character $\chi$ of a complex $K\otimes _\mathcal{O}X$, where $X$ is a tilting complex of $e\mathcal{O}G$-$f\mathcal{O}N$-bimodules. We also found the character $\chi _C$ of the complex $C$.\\
\indent Using decomposition matrices we can calculate the map on Grothendieck groups $K_0($mod-$A)\rightarrow K_0($mod-$B)$ induced by the character $\chi$ of $k\otimes _{\mathcal{O}}X$. Assuming the tilting complex $X$ lifts our complex $C$ we have a triangle
\begin{displaymath}
Q\rightarrow C\rightarrow k\otimes _{\mathcal{O}}X\rightarrow Q[1]
\end{displaymath}
where $Q$ is a bounded complex of projective $ekG$-$kN$-bimodules. We then have a map $K_0($mod-$A)\rightarrow K_0(P_B)$ induced by $\chi -\chi _C$ and the image of the simple $A$-module $S_i$ will then be the image in $K_0(P_B)$ of the complex $P_i$ required to define $X_i$ and hence this can act as a guide in choosing the terms of $P_i$.\\
\indent We used the method outlined above to find the following complexes:
\begin{equation}
\begin{aligned}
X_1:\cdots &\rightarrow RG_1\\
X_2:\cdots &\rightarrow RG_2\\
X_3:\cdots &\rightarrow RG_3\\
X_4:\cdots &\rightarrow P_{11}\rightarrow P_{14}\rightarrow RG_4\\
X_5:\cdots &\rightarrow P_7\rightarrow P_{13}\rightarrow P_{13}\oplus P_7\rightarrow P_7\oplus P_{12}\oplus P_{10}\rightarrow P_{10}\oplus P_{12}\oplus P_2\rightarrow \\
&\rightarrow P_2\oplus P_{14}\rightarrow RG_5\\
X_6:\cdots &\rightarrow P_2\rightarrow P_2\oplus P_{14}\rightarrow P_{11}\oplus P_7\rightarrow RG_6\\
X_7:\cdots &\rightarrow P_7\rightarrow P_{12}\rightarrow P_{10}\rightarrow P_7\rightarrow P_3\rightarrow RG_7\\
X_8:\cdots &\rightarrow P_7\rightarrow P_{10}\rightarrow P_{12}\rightarrow P_7\rightarrow P_5\rightarrow RG_8\\
\end{aligned}
\nonumber
\end{equation}
\begin{equation}
\begin{aligned}
X_9:\cdots &\rightarrow P_2\rightarrow P_{14}\rightarrow P_{14}\oplus P_7\rightarrow P_9\oplus P_8\oplus P_7\rightarrow RG_9\\
X_{10}:\cdots &\rightarrow P_7\rightarrow P_7\oplus P_5\oplus P_3\rightarrow P_3\oplus P_5\oplus P_{13}\rightarrow P_{13}\oplus P_{13}\rightarrow \\
&\rightarrow P_{13}\oplus P_{12}\oplus P_{10}\rightarrow RG_{10}\\
X_{11}:\cdots &\rightarrow P_{13}\rightarrow P_7\oplus P_5\oplus P_{10}\rightarrow P_{10}\oplus P_{12}\rightarrow P_2\oplus P_{13}\rightarrow P_{5}\oplus P_{14}\rightarrow \\
&\rightarrow P_{11}\oplus P_{10}\rightarrow RG_{11}\\
X_{12}:\cdots &\rightarrow P_{13}\rightarrow P_7\oplus P_3\oplus P_{12}\rightarrow P_{10}\oplus P_{12}\rightarrow P_2\oplus P_{13}\rightarrow P_{3}\oplus P_{14}\rightarrow \\
&\rightarrow P_{11}\oplus P_{12}\rightarrow RG_{12}\\
X_{13}:\cdots &\rightarrow P_7\rightarrow P_{10}\rightarrow P_{12}\rightarrow P_7\rightarrow P_9\oplus P_5\rightarrow P_{13}\rightarrow RG_{13}\\
X_{14}:\cdots &\rightarrow P_7\rightarrow P_{12}\rightarrow P_{10}\rightarrow P_7\rightarrow P_8\oplus P_3\rightarrow P_{13}\rightarrow RG_{14}\\
\end{aligned}
\nonumber
\end{equation}
where in each case $RG_i$ is in degree zero. Note that enough information is given in the appendix to determine these complexes uniquely up to isomorphism.\\
\indent Finally, we want to prove that these $X_i$'s satisfy the conditions of Rickard's theorem and hence it will follow that Brou\'e's conjecture holds for $A$.\\
\indent First we know that the $X_i$'s are stably isomorphic to $S_i\otimes _A C$ by construction and then we prove that conditions (a) and (b) hold using the program contained in \cite{mh01}. All that is left to prove is that the $X_i$'s generate $D^b(B)$ as a triangulated category. Note, while reading the following proof one should refer to the appendix where I have given the Loewy structure of the homology of the complexes $X_i$.
\begin{lemma} Let $\mathcal{T}$ be the triangulated category generated by the $X_i$'s, then $\mathcal{T}=D^b(B)$. 
\end{lemma}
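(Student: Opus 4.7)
The plan is to reduce the claim to showing that every simple $B$-module lies in $\mathcal{T}$, since the simple $B$-modules (placed in degree zero) generate $D^b(B)$ as a triangulated category. Because the stable equivalence sends simples of $A$ to objects stably isomorphic to simples of $B$ (together with projective summands), and $A$ has $14$ simples, $B$ also has $14$ simples; one expects that after enough peeling each of them can be realised from the cohomology of the $X_i$.

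The first step I would carry out is to extract the cohomology of each $X_i$ into $\mathcal{T}$. For any complex $X_i$ with bounded cohomology, the standard truncation triangles
\begin{displaymath}
\tau_{\leq n-1}X_i\rightarrow \tau_{\leq n}X_i\rightarrow H^n(X_i)[-n]\rightarrow \tau_{\leq n-1}X_i[1]
\end{displaymath}
show, by induction on the width of the cohomology, that every cohomology module $H^n(X_i)$ (placed in some degree) belongs to any triangulated subcategory containing $X_i$ together with the other cohomology objects. Thus, treating the $X_i$ collectively and proceeding from those with narrowest cohomology outwards, each of the bounded modules $H^n(X_i)$ appearing in the appendix can be placed in $\mathcal{T}$.

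Next I would use the Loewy data supplied in the appendix to break each such $H^n(X_i)$ into its simple composition factors. If $M$ is a $B$-module and $S$ sits at the top (or socle) of $M$, then the short exact sequence relating $M$, $S$ and $\mathrm{rad}(M)$ (respectively $M/\mathrm{soc}(M)$) gives a triangle in $D^b(B)$. So whenever all but one composition factor of $M$ is already known to lie in $\mathcal{T}$, the remaining factor is in $\mathcal{T}$ too. I would therefore process the list of homology modules in an order that progressively peels off one new simple $B$-module at a time: start with an $X_i$ whose cohomology is already simple (for example among $X_1,X_2,X_3$, whose only nonzero cohomology is $RG_i$), and use these simples to strip composition factors from the more complicated $H^n(X_i)$ until each of the $14$ simple $B$-modules has been produced.

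The main obstacle is bookkeeping: one has to verify, using the explicit Loewy layers in the appendix, that the $14$ simples of $B$ really are exhausted in this peeling process and that at each step there is in fact an $X_i$ whose cohomology differs from something already in $\mathcal{T}$ by a single new simple factor. This is a finite check and proceeds simple by simple. Once the last simple $B$-module has been placed in $\mathcal{T}$, the inclusion $D^b(B)\subseteq \mathcal{T}$ follows, and since $\mathcal{T}\subseteq D^b(B)$ by construction, we conclude $\mathcal{T}=D^b(B)$, completing the verification of condition (c) of Rickard's theorem and thereby Brou\'e's conjecture for $A$.
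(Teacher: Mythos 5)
Your overall strategy (reduce to showing every simple $B$-module lies in $\mathcal{T}$, then peel composition factors of the homology of the $X_i$ using the Loewy data) is the same as the paper's, but the step you dismiss as ``bookkeeping'' is where the actual content of the lemma lives, and your proposed scheme for it does not go through as stated. You claim that at each stage one can find an $X_i$ whose cohomology differs from objects already in $\mathcal{T}$ by a \emph{single} new simple factor, so that the new simple is peeled off inside one homology module. For the simples the paper labels $3$, $5$ and $12$ this fails: once $1,2,4,6,7,8,9,11,14$ have been obtained, every homology module of every $X_i$ in which $3$ occurs also contains the still-unknown factors $10$ and $13$ (e.g. $H^{-5}(X_7)$, $H^{-6}(X_{14})$), and every one containing $5$ or $12$ also contains $13$ (e.g. $H^{-5}(X_8)$, $H^{-6}(X_{11})$, $H^{-5}(X_{10})$), so no single homology module has exactly one unknown composition factor at that point. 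The paper gets around this with an extra ingredient your sketch omits: short exact sequences \emph{relating homology modules of different complexes} and auxiliary subquotients $N_1,\dots,N_8$ (sequences (B.1)--(B.14)), e.g. $N_1\cong H^{-5}(X_7)/H^{-6}(X_5)$ and $N_3\cong N_1/N_2$ with $N_2$ a submodule coming from $H^{-6}(X_{12})$; the module $N_3$ has composition factors $9$ and $3$ only, which is what finally isolates $3$, and similarly $N_6$ isolates $5$ and $N_8$ isolates $12$, after which $13$ and $10$ do peel off from $H^{-5}(X_8)$ and $H^{-5}(X_7)$.

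A second, smaller overstatement: your claim that ``proceeding from those with narrowest cohomology outwards, each of the bounded modules $H^n(X_i)$ can be placed in $\mathcal{T}$'' is not automatic either. The truncation triangles only show that a given $H^n(X_i)$ lies in $\mathcal{T}$ once all the \emph{other} cohomology modules of that same $X_i$ (or at least all their composition factors) are already known to lie in $\mathcal{T}$, and whether a consistent global order exists is exactly the finite verification in question; indeed the paper only ever establishes membership of selected cohomology modules, in a carefully chosen order, and never needs (nor proves) that all of them lie in $\mathcal{T}$ at the outset. So the proposal is a correct reduction plus a plan, but without the explicit ordering and the auxiliary exact sequences (B.1)--(B.14) it does not yet constitute a proof of the lemma.
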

\begin{proof} The proof of this theorem consists of taking each complex $X_i$ and using the short exact sequences given in appendix B to prove that every composition factor of their homology, except one, is in $T$. It then follows that every composition factor is in $T$ and we eventually conclude that $T$ contains every simple $B$-module. The details are as follows.\\
\indent First, as $X_1$, $X_2$ and $X_3$ have homologies concentrated in degree zero, we have $H^0(X_1)=1\in \mathcal{T}$, $H^0(X_2)=4\in \mathcal{T}$ and $H^0(X_3)=6\in \mathcal{T}$.\\
\indent $H^{-1}(X_4)\cong 1\in \mathcal{T}$ so $H^{-2}(X_4)\in \mathcal{T}$ and it follows from short exact sequence (B.1), in appendix B, that $11\in \mathcal{T}$.\\
\indent $H^{-2}(X_9)\cong 1\in \mathcal{T}$ and $H^{-3}(X_9)\cong H^{-2}(X_4)\in \mathcal{T}$ so $H^{-4}(X_9)\in \mathcal{T}$ and $2\in \mathcal{T}$ by short exact sequence (B.2).\\
\indent $H^{-3}(X_6)\cong H^{-4}(X_9)\in \mathcal{T}$ so $H^{-2}(X_6)\in \mathcal{T}$ and $14\in \mathcal{T}$ by short exact sequence (B.3).\\ 
\indent $H^{-1}(X_{13})\cong 4\in \mathcal{T}$, $H^{-4}(X_{13})\cong H^{-2}(X_4)\in \mathcal{T}$ and $H^{-5}(X_{13})\cong H^{-4}(X_9)\in \mathcal{T}$, so $H^{-6}(X_{13})\in \mathcal{T}$.\\
\indent $H^{-3}(X_8)\cong H^{-2}(X_4)$, $H^{-4}(X_8)\cong H^{-3}(X_6)$ and $H^{-5}(X_8)\cong H^{-6}(X_{13})$, therefore, $H^{-2}(X_8)\in \mathcal{T}$ and thus by short exact sequence (B.5) we have $9\in \mathcal{T}$.\\
\indent $H^{-1}(X_{14})\cong 6\in \mathcal{T}$, $H^{-4}(X_{14})\cong H^{-2}(X_4)\in \mathcal{T}$ and $H^{-5}(X_{14})\cong H^{-4}(X_9)\in \mathcal{T}$, so $H^{-6}(X_{14})\in \mathcal{T}$.\\
\indent $H^{-3}(X_7)\cong H^{-2}(X_4)$, $H^{-4}(X_7)\cong H^{-3}(X_6)$ and $H^{-5}(X_7)\cong H^{-6}(X_{14})$, therefore, $H^{-2}(X_9)\in \mathcal{T}$ and thus by short exact sequence (B.4) we have $8\in \mathcal{T}$.\\
\indent All the composition factors of $H^{-3}(X_5)$, $H^{-4}(X_5)$ and $H^{-5}(X_5)$ have been shown to be in $\mathcal{T}$ and so we must have $H^{-6}(X_5)\in \mathcal{T}$, hence by short exact sequence (B.6), $7\in \mathcal{T}$.\\
\indent The composition factors of $H^{-4}(X_{12})$ and $H^{-5}(X_{12})$ have been shown to be in $\mathcal{T}$ and so $H^{-6}(X_{12})\in \mathcal{T}$, it now follows from short exact sequences (B.7)-(B.9) that $N_3\in \mathcal{T}$, $3$ occurs once as a composition factor of $N_3$ and the other composition factor $9$ is in $\mathcal{T}$, hence $3\in \mathcal{T}$.\\
\indent Similarly, the composition factors of $H^{-4}(X_{11})$ and $H^{-5}(X_{11})$ have been shown to be in $\mathcal{T}$ and so $H^{-6}(X_{11})\in \mathcal{T}$, it now follows from short exact sequences (B.10)-(B.12) that $N_6\in \mathcal{T}$, $5$ occurs once as a composition factor of $N_6$ and the other composition factor $8$ is in $\mathcal{T}$, hence $5\in \mathcal{T}$.\\
\indent The composition factors of $H^{-1}(X_{10})$, $H^{-2}(X_{10})$, $H^{-3}(X_{10})$ and $H^{-4}(X_{10})$ have been shown to be in $\mathcal{T}$ and so $H^{-5}(X_{10})$. The short exact sequences (B.13) and (B.14) now imply that $N_8\in \mathcal{T}$ and the only composition factor of $N_8$ not already shown to be in $\mathcal{T}$ is $12$ which occurs just once, therefore $12\in \mathcal{T}$.\\
\indent $H^{-5}(X_8)\in \mathcal{T}$ has $13$ as a composition factor with multiplicity $1$ and all other composition factors are in $\mathcal{T}$, so $13\in \mathcal{T}$.\\
\indent Finally, $10$ is a composition factor of $H^{-5}(X_7)$ with multiplicity one and is the only simple not already shown to be in $\mathcal{T}$, hence we must have $10\in \mathcal{T}$.\\
\indent As we have shown all simple $B$-modules are contained in $\mathcal{T}$ it follows that $\mathcal{T}=D^b(B)$.
\end{proof}
\begin{theorem} Brou\'e's conjecture holds for $A$.
\end{theorem}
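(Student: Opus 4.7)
The plan is to invoke Rickard's theorem (Theorem \ref{a2}) directly, since essentially all of the ingredients have been assembled in the preceding discussion. I would take as the stable equivalence of Morita type the functor $F = - \otimes_A C$, where $C = ekGf$ is viewed as a complex concentrated in degree zero; this is a stable equivalence because $C_G(Q) = C_N(Q)$ for every non-trivial subgroup $Q \leq P$, so restriction between the principal blocks induces such an equivalence. The candidate objects in $D^b(\textrm{mod-}B)$ are the fourteen complexes $X_1, \ldots, X_{14}$ constructed above.

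Next I would verify the three hypotheses of Theorem \ref{a2} in turn. For the stable isomorphism $X_i \cong F(S_i)$ in the stable category: this holds by construction, since the defining triangle $P_i \to S_i \otimes_A C \to X_i \to P_i[1]$ has $P_i \in K^b(P_B)$, so $X_i$ and $F(S_i)$ become isomorphic after killing projectives. For conditions (a) and (b), the cohomological Schur-type Hom-vanishing follows from a direct computation in $D^b(B)$ using the MAGMA program of \cite{mh01}, as already indicated in the text above. For condition (c), the generation statement is precisely the content of the lemma just proved.

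With all three hypotheses in place, Rickard's theorem yields an equivalence $D(\textrm{Mod-}A) \simeq D(\textrm{Mod-}B)$ of triangulated categories, which restricts to the equivalence $D^b(\textrm{mod-}A) \simeq D^b(\textrm{mod-}B)$ required by Conjecture \ref{a1}, since $A$ and $B$ are finite-dimensional symmetric algebras (both are principal blocks of finite group algebras). As these are the Brauer-corresponding blocks of $kG$ and $kN_G(P)$ and the defect group $P \cong C_5 \times C_5$ is abelian, this is exactly Brou\'e's conjecture for $A$.

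There is no real remaining obstacle at this stage: the genuinely difficult work was the construction of the $X_i$ (guided by the perfect isometry and decomposition matrices), the MAGMA verification of (a) and (b), and the combinatorial chase through the short exact sequences in Appendix B that established (c) in the preceding lemma. The theorem itself is then a formal application of Rickard's machinery.
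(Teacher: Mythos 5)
Your proposal is correct and follows exactly the paper's own argument: the theorem is a formal application of Rickard's theorem \ref{a2}, with the stable equivalence given by the bimodule $C=ekGf$ (restriction), conditions (a) and (b) verified computationally via \cite{mh01}, and condition (c) supplied by the preceding generation lemma. The extra remarks you add about the defining triangle and the symmetric-algebra hypothesis are accurate elaborations of what the paper leaves implicit.
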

\begin{proof} We have shown above that the $X_i$'s satisfy the conditions of Rickard's theorem and hence the result follows from Rickard's theorem itself.
\end{proof}
\indent There is actually a refinement of Brou\'e's conjecture, due to Rickard. I refer the reader to Rickard's paper \cite{jr96} for the definition of a splendid equivalence and state the refinement as:
\begin{conjecture}[Brou\'e/Rickard Conjecture] \label{a5}If $G$ is a finite group and $A$ is a block of $kG$ with an abelian defect group $D$ and Brauer correspondent $B$, then there is a splendid equivalence between $A$ and $B$.
\end{conjecture}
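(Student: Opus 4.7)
The plan operates on two levels, since Conjecture \ref{a5} is a major open problem strictly stronger than Broué's original Conjecture \ref{a1}. At the level of the specific blocks treated in this paper (the Tits group and its extension), the goal is to upgrade the derived equivalences already constructed to splendid ones. At the general level, any approach must proceed via reduction to quasi-simple groups followed by case-by-case analysis using the Classification of Finite Simple Groups.

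For the Tits group the starting point is favorable: the stable equivalence bimodule $C = ekGf$ is a $p$-permutation summand of $kG$ viewed as an $ekG$-$kN$-bimodule, because the condition $C_G(Q) = C_N(Q)$ for every non-trivial $Q \leq P$ exhibited in \S 3 forces this summand to have trivial source with vertex $\Delta P$. I would first lift each terminal complex $P_i$ from the construction of $X_i$ to an $\mathcal{O}$-complex $\hat{P}_i$ of projective $f\mathcal{O}N$-modules, which is possible and uniquely determined up to homotopy since projective modules lift. Splicing via the triangle $P_i \to S_i \otimes_A C \to X_i \to P_i[1]$ and assembling the resulting $\hat{X}_i$ into a two-sided tilting complex $\hat{Y}$ of $e\mathcal{O}G$-$f\mathcal{O}N$-bimodules, one verifies that every term is $p$-permutation: the $e\mathcal{O}G f$ term is so by the preceding remark, while the terms coming from $\hat{P}_i$ are projective and hence automatically of trivial source. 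A splendid equivalence in the sense of Rickard \cite{jr96} would then follow, and the same technique would cover the extension $^2F_4(2)$ by combining with Theorem \ref{a4}.

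For the conjecture in full generality, the standard strategy would be: first, reduce to quasi-simple groups using Clifford-theoretic methods in the spirit of Marcus, adapted so as to preserve the $p$-permutation condition on the two-sided tilting complex; second, invoke the Classification of Finite Simple Groups to split into alternating, sporadic, and Lie-type families; third, for groups of Lie type in non-defining characteristic appeal to the geometric programme of Broué and Michel, which proposes tilting complexes arising from the cohomology of Deligne-Lusztig varieties as the candidate splendid objects, together with the Chuang-Rouquier $\mathfrak{sl}_2$-categorification in the $\mathrm{GL}_n$ and symmetric group cases; fourth, handle the alternating and sporadic families by explicit construction of the sort used in \S 3 of this paper.

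The main obstacle is simply that the conjecture is open: even the weaker Conjecture \ref{a1} remains unresolved in many cases, and any proof of Conjecture \ref{a5} must simultaneously settle it. A secondary and subtler obstacle specific to the splendid refinement is that derived equivalences constructed by hand (as in \S 3) are typically not manifestly splendid, and the gap can only be closed by choosing $\mathcal{O}$-lifts of the $P_i$ in such a way that the assembled two-sided tilting complex has $p$-permutation terms in every degree; verifying that no obstruction to this choice arises is the crux of the problem in each individual case and, at present, no uniform mechanism is known to guarantee it.
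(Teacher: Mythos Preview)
The statement is a conjecture, and the paper does not attempt to prove it in general; it only establishes the special case of the principal $5$-block of the Tits group (Corollary~\ref{a7}) and, via \S5, of $^2F_4(2)$. You correctly identify that the general statement is open, so the relevant comparison is between your route to the Tits-group case and the paper's.

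The paper's argument for Corollary~\ref{a7} is a single sentence: restriction is already a splendid stable equivalence, so the complexes $X_1,\dots,X_{14}$ constructed in \S3 satisfy the hypotheses of Holloway's variant of Rickard's theorem (Theorem~\ref{a6}), and that theorem hands you the splendid tilting complex directly. Your proposal instead tries to build the splendid two-sided complex by hand: lift each $P_i$ to $\mathcal{O}$, splice with the bimodule $C$, and check term-by-term that the result is $p$-permutation. This is essentially an attempt to redo, in this one example, the work that Theorem~\ref{a6} packages abstractly. The step you describe as ``assembling the resulting $\hat{X}_i$ into a two-sided tilting complex $\hat{Y}$'' is precisely the non-trivial content of Rickard's and Holloway's theorems; it is not a routine splice, and your sketch does not indicate how you would carry it out. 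So while your starting observation (that $C=ekGf$ is a $p$-permutation bimodule) is exactly the input the paper uses, the paper short-circuits the rest by invoking Theorem~\ref{a6}, whereas your route leaves the hardest step unexplained.

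Your discussion of the general strategy (reduction to quasi-simple groups, Deligne--Lusztig varieties, Chuang--Rouquier categorification) is reasonable as a survey of the landscape but goes well beyond anything the paper claims or needs.
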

\indent In \cite{mh01} Holloway proves a variant of Rickard's theorem:
\begin{theorem} \label{a6} Suppose $C$ is a complex of $A$-$B$-bimodules that induces a splendid stable equivalence between $A$ and $B$ and let $\{ S_1,...,S_n\}$ be a set of representatives for the isomorphism classes of simple $A$-modules. If there are objects $X_1,...,X_n$ of $D^b(B)$ such that, for each $1\leq i\leq n$, $X_i$ is stably isomorphic to $S_i\otimes _AC$, and such that:
\begin{enumerate}
	\item[(a)] $\textrm{Hom}_{D^b(B)}((X_i,X_j[m])=0\indent \textrm{ for m $<$ 0}$
	\item[(b)] $\textrm{Hom}_{D^b(B)}((X_i,X_j)= \left\{ \begin{array}{ll}
0 & \textrm{if $i \not = j$}\\
k & \textrm{if $i = j$}
\end{array} \right.$
	\item[(c)] $X_1,...,X_n \textrm{ generate } D^b(\textrm{mod-}B) \textrm{ as a triangulated category}$
	\end{enumerate}
then there is a splendid tilting complex $X$ that lifts $C$ and induces a splendid equivalence between $A$ and $B$ such that, for each $0\leq i\leq r$, $X_i\cong S_i\otimes _AX$ in $D^b(B)$.\ $\square$
\end{theorem}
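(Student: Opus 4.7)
The plan is to reduce to Rickard's Theorem \ref{a2} to obtain a derived equivalence and then refine the construction so that the resulting tilting complex is splendid and lifts $C$. Conditions (a)-(c) are identical to the hypotheses of Theorem \ref{a2}, so some tilting complex of $A$-$B$-bimodules already exists whose associated equivalence sends each $S_i$ to $X_i$. The substantive work is to produce such a tilting complex inside the class of splendid bimodule complexes and to arrange for it to agree with the given splendid stable equivalence $C$ modulo complexes of projective bimodules.

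First I would construct an explicit candidate $X$ as a mapping cone. Because $X_i$ is stably isomorphic to $S_i \otimes_A C$ in $D^b(B)$, for each $i$ there is a bounded complex $R_i$ of projective $B$-modules and a morphism $\alpha_i : R_i \to S_i \otimes_A C$ in $K^b(\textrm{mod-}B)$ whose cone is isomorphic to $X_i$. The next step is to assemble the $\alpha_i$'s into a single morphism $\alpha : R \to C$ of complexes of $A$-$B$-bimodules with $R$ a bounded complex of projective bimodules, chosen so that tensoring over $A$ with $S_i$ recovers $\alpha_i$ up to homotopy. Setting $X := \textrm{Cone}(\alpha)$ then gives a bounded complex of bimodules with $S_i \otimes_A X \cong X_i$ in $D^b(B)$, and Theorem \ref{a2} confirms that $-\otimes_A^L X$ is a derived equivalence, so that $X$ is a tilting complex.

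For splendor, the key input is that projective $A$-$B$-bimodules are $p$-permutation with vertices contained in the diagonal embedding of a defect group, so $R$ is a splendid complex; since $C$ is splendid by hypothesis and splendor is preserved under mapping cones in the bimodule homotopy category, $X$ is splendid as well. The defining triangle $R \to C \to X \to R[1]$, with $R$ built from projective bimodules, is precisely the statement that $X$ lifts $C$ in the sense of \cite{jr96}.

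The hard part will be the assembly step, upgrading the pointwise morphisms $\alpha_i$ to a coherent bimodule-level morphism $\alpha$. The freedom to add or remove projective summands without changing stable isomorphism classes can be used to reconcile choices made simple-by-simple, but one has to be careful that these projective alterations remain inside the splendid class; in particular the projective bimodules chosen must be verified to be $p$-permutation with diagonal vertices rather than arbitrary projectives. A clean way to organise this is to fix, once and for all, a single bimodule-level projective construction attached to $C$ that realises all of the required cones simultaneously, and then to verify term-by-term that its indecomposable summands have the required vertex property. Once this assembly is achieved, Theorem \ref{a2} supplies the derived equivalence, and the careful choice of $R$ and $\alpha$ upgrades it to a splendid equivalence lifting $C$.
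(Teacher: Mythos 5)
This theorem is not proved in the paper at all: it is quoted verbatim from Holloway's thesis \cite{mh01} (hence the closing $\square$), so there is no in-paper argument to compare yours against; your sketch has to stand on its own, and as it stands it has two genuine gaps. First, the step you yourself flag as ``the hard part'' --- assembling the one-sided maps $\alpha_i : R_i \to S_i\otimes_A C$ into a single map $\alpha : R \to C$ of complexes of bimodules with $S_i\otimes_A\alpha \simeq \alpha_i$ --- is essentially the entire content of the theorem, and you give no mechanism for it. There is no reason a priori that the projective complexes $R_i$, chosen simple-by-simple, are the restrictions to each $S_i$ of one bimodule-level complex of projectives; the known proofs (Okuyama's method, Rickard's symmetric-algebra paper, Holloway) do not perform a one-shot assembly but build the modification degree by degree (or pass through a one-sided tilting complex constructed from the $X_i$ and an identification of its endomorphism ring with $A$ via Linckelmann-type results on stable equivalences sending simples to simples). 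Deferring this step leaves the proof without its core.

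Second, the sentence ``Theorem \ref{a2} confirms that $-\otimes_A^{L}X$ is a derived equivalence, so that $X$ is a tilting complex'' is a misapplication. Theorem \ref{a2} asserts only the existence of \emph{some} triangulated equivalence between $D(\mathrm{Mod}\mbox{-}A)$ and $D(\mathrm{Mod}\mbox{-}B)$; it says nothing about the particular bimodule complex $X=\mathrm{Cone}(\alpha)$ you have built. That $-\otimes_A^{L}X$ is an equivalence must be verified directly --- typically by observing that $X$ agrees with $C$ up to projectives, hence still induces a stable equivalence, that it sends the simples $S_i$ to the cohomologically schurian generators $X_i$, and then invoking a recognition criterion for when such a functor between symmetric algebras is a derived equivalence; conditions (a)--(c) enter precisely there, not through a citation of \ref{a2}. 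Your splendor observations (projective bimodules are $p$-permutation with trivial, hence diagonal, vertex, and splendor is a termwise condition preserved by cones) are fine, but they only matter once the two problems above are resolved.
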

\indent Since restriction is a splendid stable equivalence it follows that our complexes satisfy the conditions of \ref{a6} and so we have:
\begin{corollary} \label{a7} The Brou\'e/Rickard conjecture holds for $A$. $\square$
\end{corollary}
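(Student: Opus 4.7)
The plan is to verify that we are exactly in the setup of Holloway's Theorem~\ref{a6} and then invoke it directly. Concretely, I would reuse the bimodule complex $C = ekGf$ (concentrated in degree zero) that was constructed in this section to give a stable equivalence of Morita type between $A$ and $B$, together with the fourteen complexes $X_1, \ldots, X_{14}$ built above.

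First I would check that $C$ induces a \emph{splendid} stable equivalence, not merely a stable equivalence of Morita type. This is the point at which the hypothesis $C_G(Q) = C_N(Q)$ for every non-trivial $Q \le P$ pays off a second time: the bimodule $ekGf$, viewed as a $k[G\times N]$-module, is a $p$-permutation module whose indecomposable summands all have vertices contained in the diagonal subgroup $\Delta P \le G \times N$, which is the defining condition for splendidness in the sense of Rickard \cite{jr96}. Since this is exactly the property that was used to show that restriction gives the stable equivalence, it comes essentially for free.

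With splendidness in hand, I would now match the remaining hypotheses of Theorem~\ref{a6} one by one against what has already been established earlier in this section: by construction each $X_i$ is stably isomorphic to $S_i \otimes_A C$; conditions~(a) and~(b) were verified using the program of \cite{mh01}; and condition~(c) is precisely the content of the lemma proved above. Therefore Theorem~\ref{a6} applies and yields a splendid tilting complex $X$ lifting $C$, giving a splendid derived equivalence between $A$ and $B$.

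I do not expect any substantial obstacle here, since all the real work has been done in the preceding arguments. The only genuinely new point to record is that restriction between a block and its Brauer correspondent (under our centralizer hypothesis) is splendid; once that is noted, the corollary is a formal consequence of Theorem~\ref{a6}.
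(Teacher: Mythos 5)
Your proposal matches the paper's argument: the paper likewise observes that restriction (the bimodule $ekGf$) is a splendid stable equivalence and that the complexes $X_1,\ldots,X_{14}$ already satisfy the hypotheses of Theorem~\ref{a6}, so the corollary follows immediately. Your extra remark justifying splendidness via vertices in the diagonal subgroup is a correct elaboration of the same one-line deduction the paper makes.
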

\indent For completeness we now prove that given the information in the appendices, that is the complexes $X_i$ and their homology, then the complexes $X_i$ are independent, up to isomorphism in $K^b(B)$, of the choice of differentials.\\
\indent Suppose we have constructed two trimmed resolutions of $S_i\otimes _AC$, both consistent with the information in the appendices, up to degree $-n+1$. Moreover, suppose these two complexes are isomorphic and for simplicity we take this isomorphism to be identification.\\
\indent In constructing the degree $-n$ differential suppose we make two choices $\phi ,\phi '\in$ Hom$_B(P_i^{-n},K_i^{-n+1})$, where $K_i^{-n+1}=$ker$(d^{-n+1})$, and hence two choices for the $-n$ differential. We'll show that for our $X_i$'s we can find an isomorphism between these two new complexes.\\
\indent To do this consider the diagram
\[
\xymatrix{ 
P_i^{-n} \ar[dr]^{\phi}
& 
& P_i^{-n+1} \ar[r] \ar@{=}[dd]
& P_i^{-n+2} \ar[r] \ar@{=}[dd]
& \cdots \\
& K_i^{-n+1} \ar[ur]^{j} \ar@{=}[dd]
&  
&
& \\
P_i^{-n} \ar[dr]^{\phi '}
& 
& P_i^{-n+1} \ar[r]
& P_i^{-n+2} \ar[r]
& \cdots \\
& K_i^{-n+1} \ar[ur]^{j}
&  
&
& \\
}
\]
where $j:K_i^{-n+1}\rightarrow P_i^{-n+1}$ is the natural inclusion.\\
\indent If Hom$_B(P_i^{-n},H^{-n+1}(X_i))=0$, then $im(\phi )$ is the largest submodule of $K_i^{-n+1}$ for which there are no maps from $P_i^{-n}$ to the quotient. Therefore $im(\phi )=im(\phi ')$ and we must have an isomorphism $\psi$ such that
\[
\xymatrix{
P_i^{-n} \ar[rr]^{d^{-n}=j\phi} \ar[d]^{\psi}
&
& P_i^{-n+1} \ar[r] \ar@{=}[d]
& P_i^{-n+2} \ar[r] \ar@{=}[d]
& \cdots \\
P_i^{-n} \ar[rr]^{d'^{-n}=j\phi '}
&
& P_i^{-n+1} \ar[r]
& P_i^{-n+2} \ar[r]
& \cdots \\
}
\]
is a cochain map isomorphism.\\
\indent It is clear from the appendices that the only complex for which the above does not hold is $X_6$ ($n=3$).\\
\indent Now, fix a map $\phi \in$ Hom$_B(P_i^{-n},K_i^{-n+1})$, with $q:K^{-n+1}\rightarrow K^{-n+1}/im(\phi )=H^{-n+1}(X_i)$ the natural quotient. Suppose there is a map $\eta :P^{-n}\rightarrow K^{-n+1}$ such that $q\eta \neq 0$ and take $\phi '=\lambda \phi +\eta$. If the map $\eta$ factors as $\eta =\gamma d^{-n}=\gamma j\phi$, where $\gamma :P^{-n+1}\rightarrow K^{-n+1}$, then we can adjust our degree $-n+1$ identification to an isomorphism, $\lambda +j\gamma$, and so extend the cochain isomorphism to
\[
\xymatrix{
P_i^{-n} \ar[rr]^{d^{-n}=j\phi} \ar@{=}[d]
&
& P_i^{-n+1} \ar[r] \ar[d]^{\lambda +j\gamma}
& P_i^{-n+2} \ar[r] \ar@{=}[d]
& \cdots \\
P_i^{-n} \ar[rr]^{d'^{-n}=j\phi '}
&
& P_i^{-n+1} \ar[r]
& P_i^{-n+2} \ar[r]
& \cdots \\
}
\]
\indent This is the situation that the complex $X_6$ ($n=3$) is in and so this shows that as claimed our complexes, $X_i$, are independent, up to isomorphism in $K^b(B)$, of the choice of differentials.\\
\indent We note here that we can actually explicitly find the summands of a tilting complex for $B$ giving the derived equivalence above, moreover, we can use this to find the loewy structures of the projective $A$-modules. We refer the reader to \cite{robbins} for further details and other examples.
\section{Lifting Derived Equivalences}
\indent In this section we give a proof of \ref{a4}. We begin with a definition.
\begin{definition} \label{a8}Let $\widetilde{G}$ be a group with a normal subgroup $G$ and let $\widetilde{A}$ be a summand of $k\widetilde{G}$ and $A$ be a block of $G$. We say $\widetilde{A}$ and $A$ are \textsl{linked} if every simple $\widetilde{A}$-module is a summand of a simple $A$-module induced up to $\widetilde{G}$ and every simple $A$-module induced up to $\widetilde{G}$ is a semisimple $\widetilde{A}$-module.
\end{definition}
\indent We now show that under certain assumptions a block of $kG$ is always linked to a summand of $k\widetilde{G}$. We begin by proving a lemma which we shall need in the proof of proposition \ref{a10}.
\begin{lemma} \label{a9} Let $\widetilde{G}$ be a group with a normal subgroup $G$, $k$ be an algebraically closed field of characteristic $p$, $H=\widetilde{G}/G$ be a $p'$-group and $M$, $N$ and $S$ be $k\widetilde{G}$-modules, then the short exact sequence
\begin{displaymath}
0\rightarrow M\stackrel{\alpha}{\rightarrow}S\stackrel{\beta}{\rightarrow}N\rightarrow 0
\end{displaymath}
is split if and only if the short exact sequence
\begin{displaymath}
0\rightarrow M\downarrow _G\stackrel{\alpha _G}{\rightarrow}S\downarrow _G\stackrel{\beta _G}{\rightarrow}N\downarrow _G\rightarrow 0
\end{displaymath}
is split, where $\alpha _G$ and $\beta _G$ are the retrictions of $\alpha$ and $\beta$ to $kG$-module homomorphisms.
\end{lemma}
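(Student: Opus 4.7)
The plan is to prove the two directions separately; the forward direction is immediate and the real content is in the converse.

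For the forward direction, if $\rho:N\to S$ is a $k\widetilde{G}$-linear section of $\beta$, then its restriction $\rho\!\downarrow_G$ is a $kG$-linear section of $\beta_G$, so the restricted sequence splits.

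For the converse, the approach I would take is the standard averaging trick available whenever the index $[\widetilde{G}:G]=|H|$ is invertible in $k$. Fix a set of coset representatives $\{\tilde{h}:h\in H\}$ for $G$ in $\widetilde{G}$ and, given a $kG$-linear section $\sigma:N\downarrow_G\to S\downarrow_G$ of $\beta_G$, define
\begin{displaymath}
\tilde{\sigma}:N\to S,\qquad \tilde{\sigma}(n)=\frac{1}{|H|}\sum_{h\in H}\tilde{h}\cdot\sigma(\tilde{h}^{-1}\cdot n).
\end{displaymath}
The verification then proceeds in three small steps: (i) $\tilde{\sigma}$ is independent of the choice of coset representatives, because replacing $\tilde{h}$ by $\tilde{h}g$ with $g\in G$ does not change the summand, using that $\sigma$ is $kG$-linear; (ii) $\tilde{\sigma}$ is $k\widetilde{G}$-linear, which follows since the sum runs over a full set of coset representatives and so is invariant under left multiplication by any element of $\widetilde{G}$; (iii) $\beta\tilde{\sigma}=\mathrm{id}_N$, since $\beta$ is $k\widetilde{G}$-linear and hence
\begin{displaymath}
\beta\bigl(\tilde{h}\sigma(\tilde{h}^{-1}n)\bigr)=\tilde{h}\beta_G\bigl(\sigma(\tilde{h}^{-1}n)\bigr)=\tilde{h}\tilde{h}^{-1}n=n
\end{displaymath}
for every $h\in H$, so the average gives $n$ back.

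Conceptually this is just the statement that the natural map $\mathrm{Hom}_{k\widetilde{G}}(N,S)\to\mathrm{Hom}_{kG}(N,S)^H$ is an isomorphism when $|H|$ is invertible in $k$, with the averaging operator $e=\tfrac{1}{|H|}\sum_{h\in H}h$ providing a Reynolds-type projection; the existence of a $k\widetilde{G}$-linear section is therefore equivalent to the $H$-invariance of some (equivalently, the averaged) $kG$-linear section. The only delicate point, and the step I expect to need the most care, is checking that the formula for $\tilde{\sigma}$ is well defined independently of the coset representatives chosen; once that is in hand, $k\widetilde{G}$-linearity and the section property are essentially formal. No hypothesis on $M$, $N$ or $S$ beyond their being $k\widetilde{G}$-modules is needed.
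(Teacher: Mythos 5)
Your proof is correct. The paper disposes of the lemma in one line by citing injectivity of the restriction map $\mathrm{Ext}^1_{k\widetilde{G}}(N,M)\rightarrow \mathrm{Ext}^1_{kG}(N\downarrow_G,M\downarrow_G)$ when the index $[\widetilde{G}:G]=|H|$ is prime to $p$; you instead prove the nontrivial direction directly by averaging a $kG$-linear section $\sigma$ of $\beta_G$ over coset representatives, $\tilde{\sigma}(n)=\tfrac{1}{|H|}\sum_{h\in H}\tilde{h}\,\sigma(\tilde{h}^{-1}n)$, and your three verification steps (independence of the chosen representatives via $kG$-linearity of $\sigma$, $k\widetilde{G}$-linearity by re-indexing the representatives, and $\beta\tilde{\sigma}=\mathrm{id}_N$) all go through, with invertibility of $|H|$ in $k$ supplied by the $p'$-hypothesis. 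The two arguments rest on the same mechanism: your averaging operator is exactly the relative trace underlying the transfer map whose composite with restriction is multiplication by $|H|$, which is how the $\mathrm{Ext}^1$-injectivity quoted in the paper is proved. Your version buys a self-contained elementary argument that makes the construction of the $k\widetilde{G}$-splitting explicit and shows no hypotheses on $M$, $N$, $S$ are needed beyond being $k\widetilde{G}$-modules; the paper's version buys brevity and makes visible that the statement is purely degree-one cohomological information. The only implicit assumption in your averaging is finiteness of $H$, which is automatic in the paper's setting of finite groups.
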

\begin{proof} The result follows from the fact that since the index of $G$ in $\widetilde{G}$ is prime to $p$, the restriction map:
\begin{displaymath}
Ext_{k\widetilde{G}}^1(N,M)\rightarrow Ext_{kG}^1(N\downarrow _G,M\downarrow _G)
\end{displaymath}
is injective.
\end{proof}
\begin{proposition} \label{a10} Let $\widetilde{G}$ be a group with a normal subgroup $G$, $k$ be an algebraically closed field of characteristic $p$ and $H=\widetilde{G}/G$ be a $p'$-group, then every simple $kG$-module induced up to $\widetilde{G}$ is semisimple.
\end{proposition}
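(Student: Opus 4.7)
My plan is to reduce semisimplicity of the induced module to a splitting problem and then apply Lemma \ref{a9}.

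Let $S$ be a simple $kG$-module and set $M = S\uparrow^{\widetilde{G}}$. The goal is to show $M$ is a semisimple $k\widetilde{G}$-module, i.e.\ that every $k\widetilde{G}$-submodule $N \leq M$ admits a complement. Equivalently, every short exact sequence of $k\widetilde{G}$-modules
\begin{displaymath}
0 \rightarrow N \rightarrow M \rightarrow M/N \rightarrow 0
\end{displaymath}
splits. Since $H = \widetilde{G}/G$ is a $p'$-group, Lemma \ref{a9} tells us that this sequence splits over $k\widetilde{G}$ if and only if its restriction to $kG$ splits. So the task reduces to proving that $M\downarrow_G$ is a semisimple $kG$-module.

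For this, I would choose a transversal $\{h_1,\ldots,h_n\}$ for $G$ in $\widetilde{G}$ and write $M\downarrow_G = \bigoplus_{i=1}^n h_i\otimes S$ as a $k$-vector space. Each summand $h_i\otimes S$ is a $kG$-submodule because $G$ is normal in $\widetilde{G}$: for $g\in G$ and $s\in S$, $g\cdot(h_i\otimes s) = h_i\otimes (h_i^{-1}gh_i)s$, which stays inside $h_i\otimes S$. The resulting $kG$-module is isomorphic to the conjugate ${}^{h_i^{-1}}\!S$, which is simple since conjugation by an element of $\widetilde{G}$ is an algebra automorphism of $kG$. Hence $M\downarrow_G$ is a direct sum of simple $kG$-modules, so it is semisimple.

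Semisimplicity of $M\downarrow_G$ forces the restricted sequence $0\to N\downarrow_G\to M\downarrow_G \to (M/N)\downarrow_G\to 0$ to split, and then Lemma \ref{a9} lifts this splitting back up to $k\widetilde{G}$. Thus every submodule of $M$ has a complement, which gives the semisimplicity claim. There is no real obstacle beyond recognising that Lemma \ref{a9} is exactly the bridge from the easy restricted statement to the desired statement over $\widetilde{G}$; the only point requiring a small check is that $h_i\otimes S$ is stable under $G$, which uses normality of $G$ in $\widetilde{G}$.
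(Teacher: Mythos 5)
Your proposal is correct and follows essentially the same route as the paper: reduce to the splitting of short exact sequences, note that $S\uparrow^{\widetilde{G}}\downarrow_G$ is semisimple (you spell out the conjugate decomposition that the paper gets from Mackey's formula), and use Lemma \ref{a9} to transfer the splitting back to $k\widetilde{G}$. The only cosmetic difference is that you argue directly while the paper phrases it as a contradiction.
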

\begin{proof} Let $S$ be a simple $kG$-module, then by Mackeys decomposition formula $S\uparrow ^{\widetilde{G}}\downarrow _G$ is semisimple. Suppose $S\uparrow ^{\widetilde{G}}$ is not semisimple, then there is a non-split short exact sequence $0\rightarrow M\rightarrow S\uparrow ^{\widetilde{G}}\rightarrow N\rightarrow 0$, but since $S\uparrow ^{\widetilde{G}}\downarrow _G$ is semisimple this short exact sequence becomes split when you apply the restriction functor and this contradicts lemma \ref{a9}.
\end{proof}
\begin{proposition} \label{a11} Let $\widetilde{G}$ be a group with a normal subgroup $G$, $k$ be an algebraically closed field of characteristic $p$, $H=\widetilde{G}/G$ be a $p'$-group and $A$ be a block of $kG$ which is stable under the action of $H$. Then there is a summand $\widetilde{A}$ of $k\widetilde{G}$ which is linked to $A$.
\end{proposition}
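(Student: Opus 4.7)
The plan is to take $\widetilde{A}:=ek\widetilde{G}$, where $e$ is the block idempotent of $A$, and verify the two halves of the linkage condition using Proposition \ref{a10} together with a short Frobenius reciprocity argument.

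First I would check that $\widetilde{A}=ek\widetilde{G}$ is actually a direct summand of $k\widetilde{G}$. The $H$-stability of $A$ says that conjugation by any $g\in\widetilde{G}$ sends $e$ to itself (the only alternative being that it sends $e$ to a distinct block idempotent of $kG$). Combined with $e\in Z(kG)$, this gives $e\in Z(k\widetilde{G})$, so $ek\widetilde{G}$ is a two-sided ideal complement of $(1-e)k\widetilde{G}$; it is the sum of those blocks of $k\widetilde{G}$ that cover $A$.

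Next, the first linkage condition: if $S$ is a simple $A$-module, then $S\uparrow^{\widetilde{G}}\cong k\widetilde{G}\otimes_{kG}S$ is naturally a module over $ek\widetilde{G}=\widetilde{A}$, because $e$ is central in $k\widetilde{G}$ and acts as the identity on $S$. Its semisimplicity as an $\widetilde{A}$-module is then exactly the statement of Proposition \ref{a10}. For the converse condition, let $T$ be a simple $\widetilde{A}$-module. Then $T\downarrow_G$ is a non-zero $A$-module (since $e$ acts as the identity), so it contains a simple $A$-submodule $S$. By Frobenius reciprocity,
\begin{displaymath}
\mathrm{Hom}_{k\widetilde{G}}(S\uparrow^{\widetilde{G}},T)\cong \mathrm{Hom}_{kG}(S,T\downarrow_G)\neq 0,
\end{displaymath}
and since $T$ is simple any such non-zero map is surjective; because $S\uparrow^{\widetilde{G}}$ is semisimple by Proposition \ref{a10}, $T$ is a direct summand of $S\uparrow^{\widetilde{G}}$, as required.

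The only subtle point is really the first step: recognising that $H$-stability lets us lift $e$ to a central idempotent in $k\widetilde{G}$, which is what makes the induction $S\uparrow^{\widetilde{G}}$ land in $\widetilde{A}$ and makes the restriction $T\downarrow_G$ land in $A$. Once this is in place, everything else is routine Clifford theory coupled with Proposition \ref{a10}, and I do not anticipate any genuine obstacle.
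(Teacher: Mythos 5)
Your proposal is correct and follows essentially the same route as the paper: you take $\widetilde{A}=ek\widetilde{G}=A\uparrow^{\widetilde{G}}$ (a bimodule/ideal summand because $H$-stability forces $e$ to be central in $k\widetilde{G}$) and invoke Proposition \ref{a10} for semisimplicity of induced simples. The only difference is that you spell out, via Frobenius reciprocity, the converse direction that the paper dismisses as ``clearly'' true, which is a welcome but not essentially new addition.
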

\begin{proof} Let $\widetilde{A}=A\uparrow ^{\widetilde{G}}$, then as $A$ is stable under the action of $H$ it follows that $\widetilde{A}$ is a summand of $k\widetilde{G}$ as a ($k\widetilde{G},k\widetilde{G}$)-bimodule. By \ref{a10} we know that every simple $A$-module induced up to $\widetilde{G}$ is a semisimple $\widetilde{A}$-module and clearly, every simple $\widetilde{A}$-module is a summand of a simple $A$-module induced up to $\widetilde{G}$. Therefore $\widetilde{A}$ and $A$ are linked. 
\end{proof}
\indent For the rest of this section we take $k$ to be an algebraically closed field of characteristic $p$, $\widetilde{G}$ to be a group with a normal subgroup $G$, $\widetilde{N}$ to be a group with a normal subgroup $N$ and we assume $H=\widetilde{G}/G\cong \widetilde{N}/N$ is a $p'$-group. We also take $A$ to a block of $kG$ stable under the action of $H$, $B$ to be a block of $kN$ also stable under the action of $H$ and set $\widetilde{A}=A\uparrow ^{\widetilde{G}}$ and $\widetilde{B}=B\uparrow ^{\widetilde{N}}$. By \ref{a11} we know that $\widetilde{A}$ and $A$ are linked and $\widetilde{B}$ and $B$ are linked. Moreover, we list the simple $A$-modules as $s_1,...,s_m,s_{11},...,s_{1k_1},s_{21},...,s_{2k_2},...,s_{r1},...,s_{rk_r}$ such that $H$ fixes $s_1,...,s_m$ and for each $1\leq i\leq r$ permutes $s_{i1},...,s_{ik_i}$, and we denote the simple $\widetilde{A}$-modules as $S_{11},...,S_{1k_1},...,S_{m1},...,S_{mk_m},\widetilde{S}_{11},...,\widetilde{S}_{1l_1},...,\\ \widetilde{S}_{r1},...,\widetilde{S}_{rl_r}$, such that the $S_{ij}$'s are summands of $s_i\uparrow ^{\widetilde{G}}$ and the $\widetilde{S}_{ij}$'s are summands of $s_{i1}\uparrow ^{\widetilde{G}}$. We should note here that $s_{ij}\uparrow ^{\widetilde{G}}\cong s_{i1}\uparrow ^{\widetilde{G}}$ for $1\leq i\leq r$ and $1\leq j\leq k_i$.
\begin{lemma} \label{b2} If $X_1,..., X_n\in D^b(\textnormal{mod-}B)$ generate $D^b(\textnormal{mod-}B)$, then \\
$X_1\uparrow ^{\widetilde{N}},..., X_n\uparrow ^{\widetilde{N}}\in D^b(\textnormal{mod-}\widetilde{B})$ generate $D^b(\textnormal{mod-}\widetilde{B})$ as a thick subcategory.
\end{lemma}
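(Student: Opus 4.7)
Let $\mathcal{T}$ be the thick subcategory of $D^b(\textnormal{mod-}\widetilde{B})$ generated by $X_1\uparrow^{\widetilde{N}},\ldots,X_n\uparrow^{\widetilde{N}}$. My plan is to show that every object $Y\in D^b(\textnormal{mod-}\widetilde{B})$ lies in $\mathcal{T}$, by running the standard restriction--induction argument that works whenever $[\widetilde{N}:N]$ is invertible in $k$.

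First I would set up the two functors. Since the block idempotent $f$ of $B$ is $H$-fixed, it lies in $Z(k\widetilde{N})$ and $\widetilde{B}=fk\widetilde{N}$; thus restriction along $N\hookrightarrow\widetilde{N}$ carries $\widetilde{B}$-modules to $B$-modules, while induction along the same inclusion sends $B$-modules to $\widetilde{B}$-modules. Both functors are exact (induction because $k\widetilde{N}$ is free as a $kN$-module), so they pass to exact functors between the bounded derived categories and preserve triangles, shifts, and direct summands.

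Next, given $Y\in D^b(\textnormal{mod-}\widetilde{B})$, the complex $Y\downarrow_N$ lies in $D^b(\textnormal{mod-}B)$, and by hypothesis it is in the triangulated subcategory of $D^b(\textnormal{mod-}B)$ generated by the $X_i$. Applying the induction functor termwise, $(Y\downarrow_N)\uparrow^{\widetilde{N}}$ lies in the triangulated subcategory of $D^b(\textnormal{mod-}\widetilde{B})$ generated by the $X_i\uparrow^{\widetilde{N}}$, hence in $\mathcal{T}$.

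Finally I would show that $Y$ is a direct summand of $(Y\downarrow_N)\uparrow^{\widetilde{N}}$ in $D^b(\textnormal{mod-}\widetilde{B})$, which then places $Y$ in $\mathcal{T}$ because $\mathcal{T}$ is thick. This is the only real point in the proof: for any $k\widetilde{N}$-module $M$ the counit $M\downarrow_N\uparrow^{\widetilde{N}}\to M$, $y\otimes m\mapsto ym$, is split by the natural map $m\mapsto |H|^{-1}\sum_{h\in H}h^{-1}\otimes hm$, which makes sense because $|H|$ is a unit in $k$. This splitting is natural in $M$, so applied termwise to $Y$ it gives a splitting at the level of cochain complexes, and therefore a splitting in $D^b(\textnormal{mod-}\widetilde{B})$. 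The main (and essentially only) obstacle is verifying this naturality carefully so that the splitting is genuinely a chain map, but this is immediate from the fact that the splitting is built entirely from the group action on the coefficient side.
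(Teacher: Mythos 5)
Your argument is correct, but it takes a genuinely different route from the paper. The paper's proof never touches a general object of $D^b(\textnormal{mod-}\widetilde{B})$: it inducts each simple $B$-module $s$ (so $s\uparrow^{\widetilde{N}}$ lies in the triangulated subcategory generated by the $X_i\uparrow^{\widetilde{N}}$, since induction is exact and the $X_i$ generate $D^b(\textnormal{mod-}B)$), and then invokes the fact that $\widetilde{B}$ and $B$ are \emph{linked} (Proposition \ref{a11}, which rests on the $p'$-condition on $H$ via Lemma \ref{a9} and Proposition \ref{a10}) to conclude that every simple $\widetilde{B}$-module is a direct summand of some $s\uparrow^{\widetilde{N}}$, hence lies in the thick closure; since the simples generate, this finishes the proof. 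You instead take an arbitrary $Y\in D^b(\textnormal{mod-}\widetilde{B})$, restrict, induce, and split off $Y$ from $(Y\downarrow_N)\uparrow^{\widetilde{N}}$ via the averaging section $m\mapsto |H|^{-1}\sum_g g\otimes g^{-1}m$ of the counit --- the standard relative-projectivity argument, legitimate here because the standing hypothesis of \S 4 makes $H$ a $p'$-group, and natural in $M$, so it is a genuine map of complexes. Both proofs use $p\nmid |H|$ in an essential way; yours is more self-contained (it bypasses the linkage machinery and the reduction to simples entirely) and actually proves the stronger statement that every object of $D^b(\textnormal{mod-}\widetilde{B})$ is a summand of an object induced from $B$, whereas the paper's version reuses results it needs elsewhere in \S 4 anyway. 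One small point of care in your write-up: state the splitting with coset representatives and check independence of the choice (replacing $g$ by $gn$, $n\in N$, leaves $g\otimes g^{-1}m$ unchanged in $k\widetilde{N}\otimes_{kN}M$); also note that termwise induction computes the derived functor since induction is exact, which you implicitly use when passing from complexes to $D^b$.
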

\begin{proof} Induction is an exact functor and so it sends triangles to triangles, therefore, since $X_1,..., X_n$ generate $D^b(\textnormal{mod-}B)$ it follows that $s\uparrow ^{\widetilde{N}}$ is in the triangulated category generated by $X_1\uparrow ^{\widetilde{N}},..., X_n\uparrow ^{\widetilde{N}}$, where $s$ is a simple $B$-module. As $\widetilde{B}$ and $B$ are linked we know that each simple $\widetilde{B}$-module is a summand of a module in the triangulated category generated by $X_1\uparrow ^{\widetilde{N}},..., X_n\uparrow ^{\widetilde{N}}$ and so $X_1\uparrow ^{\widetilde{N}},..., X_n\uparrow ^{\widetilde{N}}$ generate $D^b(\textnormal{mod-}\widetilde{B})$ as a thick subcategory.
\end{proof}
\begin{lemma} \label{a12} Let $X_1,...,X_n\in D^b($mod-$B)$ be a cohomologically schurian set of generators, stable under the action of $H$ such that
\begin{displaymath}
Hom_{D^b(B)/K^b(P_{B})}(X_i,X_i)=k
\end{displaymath}
and if $i\neq j$ then 
\begin{displaymath}
Hom_{D^b(B)/K^b(P_{B})}(X_i,X_j)=0.
\end{displaymath}
Then, for each $1\leq i,j\leq n$, the natural map
\begin{displaymath}
Hom_{D^b(\widetilde{B})}(X_i\uparrow ^{\widetilde{N}},X_j\uparrow ^{\widetilde{N}})\rightarrow Hom_{D^b(\widetilde{B})/K^b(P_{\widetilde{B}})}(X_i\uparrow ^{\widetilde{N}},X_j\uparrow ^{\widetilde{N}})
\end{displaymath}
is an isomorphism of rings.
\end{lemma}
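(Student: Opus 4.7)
The plan is to combine the induction--restriction adjunction with Mackey's formula to decompose both Hom groups into direct sums of Hom groups taken in $D^b(B)$ and its stable quotient, and then to invoke the cohomologically schurian hypothesis summand by summand.

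First, I would observe that since $H$ is a $p'$-group, both restriction $\downarrow_N$ and induction $\uparrow^{\widetilde{N}}$ are exact and send projective modules to projective modules; hence they descend to the Verdier quotients by $K^b(P_B)$ and $K^b(P_{\widetilde{B}})$, and the induction--restriction adjunction descends as well. The natural map in the lemma is induced by the quotient functor $D^b(\widetilde{B})\to D^b(\widetilde{B})/K^b(P_{\widetilde{B}})$, so it is automatically a ring homomorphism when $i=j$; it therefore suffices to establish bijectivity for each pair $(i,j)$.

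Next, Mackey's formula applied to $N\trianglelefteq \widetilde{N}$ gives $X_j\uparrow^{\widetilde{N}}\downarrow_N\cong \bigoplus_{h\in H}X_j^h$, and Frobenius reciprocity then yields a natural isomorphism
\[
\text{Hom}_{D^b(\widetilde{B})}(X_i\uparrow^{\widetilde{N}},X_j\uparrow^{\widetilde{N}})\cong\bigoplus_{h\in H}\text{Hom}_{D^b(B)}(X_i,X_j^h),
\]
and the analogous formula in the stable category. Because the quotient functor respects this decomposition (induction and restriction descend compatibly, as noted above), the map in question breaks up as a direct sum over $h\in H$ of the natural maps $\text{Hom}_{D^b(B)}(X_i,X_j^h)\to \text{Hom}_{D^b(B)/K^b(P_B)}(X_i,X_j^h)$. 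By the $H$-stability of the set $\{X_1,\dots,X_n\}$, each conjugate $X_j^h$ is isomorphic to some $X_l$; the cohomologically schurian condition on the $X_k$ in $D^b(B)$, together with the parallel hypothesis in the stable quotient assumed in the lemma, then show that both source and target of each summand map equal $k$ when $l=i$ and vanish otherwise, with identity going to identity. Summing over $h$ gives the required isomorphism.

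The main obstacle I anticipate is making precise that the Frobenius/Mackey decomposition is strictly natural with respect to the quotient to the stable category, so that the summand maps on the two sides really do line up. This ultimately reduces to checking that induction and restriction preserve perfect complexes, which is automatic once $|H|$ is invertible in $k$; the rest of the argument is standard bookkeeping.
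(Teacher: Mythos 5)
Your proof is correct and follows essentially the same route as the paper's: the induction--restriction adjunction plus Mackey's formula reduce the map in question to a direct sum of the natural maps $\mathrm{Hom}_{D^b(B)}(X_i,X_j^h)\to \mathrm{Hom}_{D^b(B)/K^b(P_{B})}(X_i,X_j^h)$, each of which is an isomorphism by the cohomologically schurian hypothesis together with the stable-category hypothesis (identity maps to identity when $X_j^h\cong X_i$, and both sides vanish otherwise). The only cosmetic difference is that you explicitly flag compatibility of the adjunction with the Verdier quotients; note this just needs induction and restriction to preserve projectives, which holds for any subgroup and does not actually require $|H|$ to be invertible in $k$.
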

\begin{proof} Clearly the natural map
\begin{displaymath}
Hom_{D^b(B)}(X_i,X_i)\rightarrow Hom_{D^b(B)/K^b(P_{B})}(X_i,X_i)
\end{displaymath}
is non-zero, since the identity map is mapped to the identity map, and so this map must be an isomorphism of rings since Hom$_{D^b(B)}(X_i,X_i)=k$ and Hom$_{D^b(B)/K^b(P_{B})}(X_i,X_i)=k$. Also, if $i\neq j$ then 
\begin{displaymath}
Hom_{D^b(B)}(X_i,X_j)\rightarrow Hom_{D^b(B)/K^b(P_{B})}(X_i,X_j)
\end{displaymath}
is trivially an isomorphism of rings since both sides are zero.\\
\indent Since induction and restriction are adjoints we know that the natural map
\begin{displaymath}
Hom_{D^b(\widetilde{B})}(X_i\uparrow ^{\widetilde{N}},X_j\uparrow ^{\widetilde{N}})\rightarrow Hom_{D^b(\widetilde{B})/K^b(P_{\widetilde{B}})}(X_i\uparrow ^{\widetilde{N}},X_j\uparrow ^{\widetilde{N}})
\end{displaymath}
is an isomorphism of rings if and only if the natural map
\begin{displaymath}
Hom_{D^b(B)}(X_i,X_j\uparrow ^{\widetilde{N}}\downarrow _N)\rightarrow Hom_{D^b(B)/K^b(P_{B})}(X_i,X_j\uparrow ^{\widetilde{N}}\downarrow _N)
\end{displaymath}
is an isomorphism of rings. By Mackey's theorem this is true if and only if the natural map 
\begin{displaymath}
Hom_{D^b(B)}(X_i,X_j^{h_1}\oplus \cdots \oplus X_j^{h_{|H|}})\rightarrow Hom_{D^b(B)/K^b(P_{B})}(X_i,X_j^{h_1}\oplus \cdots \oplus X_j^{h_{|H|}})
\end{displaymath}
is an isomorphism of rings, where $H=\{ h_1,...,h_{|H|}\}$ and we are denoting the image of $X_i$ under the action of $h$ by $X_i^h$, which is true if and only if the natural map
\begin{displaymath}
\bigoplus _{k=1}^{|H|}Hom_{D^b(B)}(X_i,X_j^{h_k})\rightarrow \bigoplus _{k=1}^{|H|}Hom_{D^b(B)/K^b(P_{B})}(X_i,X_j^{h_k})
\end{displaymath}
is an isomorphism of rings. This is clearly true since above we've shown it to be true for each summand.
\end{proof}
\indent We can now prove our result \ref{a4} which we state again for the reader's convenience.
\begin{theorem} \label{a13} Let $\widetilde{F}:\overline{mod}-\widetilde{A}\rightarrow \overline{mod}-\widetilde{B}$ and $F:\overline{mod}-A\rightarrow \overline{mod}-B$ be stable equivalences of Morita type such that 
\begin{equation}
F(-)\uparrow ^{\widetilde{N}}\cong \widetilde{F}(-\uparrow ^{\widetilde{G}})
\end{equation}
Suppose $X_1,...,X_m,X_{11},...,X_{1k_1},...,X_{r1},...,X_{rk_r}\in D^b(\textnormal{mod}-B)$ is a cohomologically schurian set of generators such that $H$ permutes $X_{i1},...,X_{ik_i}$ for $1\leq i\leq r$ and fixes $X_1,...,X_m$, $X_{ij}\cong F(s_{ij})$ for $1\leq i\leq r$ and $1\leq j\leq k_r$, and $X_i\cong F(s_i)$ for $1\leq i\leq m$. Then $\widetilde{A}$ and $\widetilde{B}$ are derived equivalent.
\end{theorem}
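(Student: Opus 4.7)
The plan is to apply Rickard's theorem (Theorem \ref{a2}) to the stable equivalence $\widetilde{F}$ by constructing in $D^b(\text{mod-}\widetilde{B})$ a cohomologically schurian set of generators indexed by the simple $\widetilde{A}$-modules $S_{ij}$ and $\widetilde{S}_{ij}$. The inputs to Rickard's theorem will be produced by inducing the given objects $X_i$ and $X_{ij}$ up to $\widetilde{N}$ and then decomposing those inductions into indecomposable direct summands in the derived category.

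First I would construct the objects. Since $F$ is a stable equivalence and the family $X_1,\dots,X_m,X_{11},\dots,X_{rk_r}$ is schurian with each $X_\bullet$ stably isomorphic to $F$ of a simple module, the stable Hom-space between any two distinct members is zero and the stable endomorphism ring of each is $k$; this verifies the hypothesis of Lemma \ref{a12}. For an $H$-fixed $s_i$, the intertwining hypothesis gives
\begin{displaymath}
X_i\uparrow^{\widetilde{N}}\;\cong\;\widetilde{F}(s_i\uparrow^{\widetilde{G}})\;\cong\;\widetilde{F}(S_{i1})\oplus\cdots\oplus\widetilde{F}(S_{ik_i})
\end{displaymath}
in the stable category of $\widetilde{B}$, with the summands pairwise non-isomorphic simple stable objects. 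By Lemma \ref{a12} the natural map from $\mathrm{End}_{D^b(\widetilde{B})}(X_i\uparrow^{\widetilde{N}})$ to its image in the stable category is a ring isomorphism, so the orthogonal primitive idempotents in the latter lift, yielding a derived-category decomposition $X_i\uparrow^{\widetilde{N}}\cong \widetilde{X}_{i1}\oplus\cdots\oplus\widetilde{X}_{ik_i}$ with $\widetilde{X}_{ij}$ stably isomorphic to $\widetilde{F}(S_{ij})$. The same argument applied to $X_{i1}\uparrow^{\widetilde{N}}$ for each $H$-orbit produces summands $\widetilde{Y}_{ij}$ stably isomorphic to $\widetilde{F}(\widetilde{S}_{ij})$.

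Next I would verify Rickard's three conditions for $\{\widetilde{X}_{ij},\widetilde{Y}_{ij}\}$. For condition (a), Frobenius reciprocity and Mackey give
\begin{displaymath}
\mathrm{Hom}_{D^b(\widetilde{B})}(X_i\uparrow^{\widetilde{N}},X_j\uparrow^{\widetilde{N}}[m])\;\cong\;\bigoplus_{h\in H}\mathrm{Hom}_{D^b(B)}(X_i,X_j^{h}[m]),
\end{displaymath}
which vanishes for $m<0$ since the collection of $X$'s is permuted by $H$ within itself and is schurian; the vanishing descends to the summands $\widetilde{X}_{ij},\widetilde{Y}_{ij}$. For condition (b), Lemma \ref{a12} again identifies derived Homs among these induced objects with stable Homs, and in the stable category the $\widetilde{F}(S_{ij}),\widetilde{F}(\widetilde{S}_{ij})$ are pairwise non-isomorphic simples with endomorphism ring $k$; this passes to direct summands. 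For condition (c), Lemma \ref{b2} shows that the $X_i\uparrow^{\widetilde{N}}$ generate $D^b(\text{mod-}\widetilde{B})$ as a thick subcategory; but each of these is already a \emph{direct sum}, inside $D^b(\widetilde{B})$, of its $\widetilde{X}_{ij}$ (or $\widetilde{Y}_{ij}$) summands, so the triangulated subcategory generated by $\{\widetilde{X}_{ij},\widetilde{Y}_{ij}\}$ contains the $X_i\uparrow^{\widetilde{N}}$ and is closed under the summands that occur in Lemma \ref{b2}'s argument, hence equals the thick closure and so all of $D^b(\text{mod-}\widetilde{B})$. Rickard's theorem applied to $\widetilde{F}$ then yields the desired derived equivalence between $\widetilde{A}$ and $\widetilde{B}$.

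The main obstacle is Step 1, the lifting of the stable-category decomposition of $X_i\uparrow^{\widetilde{N}}$ (and of $X_{i1}\uparrow^{\widetilde{N}}$) to an actual direct sum decomposition in $D^b(\widetilde{B})$ whose summands correspond bijectively to the relevant simple $\widetilde{A}$-modules. Everything else is a routine combination of Frobenius reciprocity, the linkedness established in Proposition \ref{a11}, and Lemmas \ref{b2} and \ref{a12}; the genuine content lies in extracting the derived-category idempotent decomposition from the stable one, which is exactly what Lemma \ref{a12} was set up to provide.
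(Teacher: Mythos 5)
Your proposal follows essentially the same route as the paper: induce the $X$'s to $\widetilde{N}$, use adjunction/Mackey for the negative-shift vanishing, use Lemma \ref{a12} together with linkedness to identify the derived endomorphism ring of $X_t\uparrow^{\widetilde{N}}$ with the semisimple stable endomorphism ring of $s_t\uparrow^{\widetilde{G}}$ and split the induced objects accordingly, invoke Lemma \ref{b2} for generation, identify the summands stably with $\widetilde{F}$ of the simple $\widetilde{A}$-modules, and conclude by Rickard's theorem. The only slight deviation is that the paper tracks the multiplicities $n_j$ (endomorphism ring $M_{n_1}(k)\oplus\cdots\oplus M_{n_{q_t}}(k)$) in the decomposition of $s_t\uparrow^{\widetilde{G}}$, whereas your displays implicitly assume multiplicity one, but this does not affect the substance of the argument.
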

\begin{proof} Let $i,j\in \{1,...,m,11,21,...,r1\}$, then by Mackey's theorem we know that 
\begin{displaymath}
Hom_{D^b(\widetilde{B})}(X_i\uparrow ^{\widetilde{N}},X_j\uparrow ^{\widetilde{N}}[t])\cong Hom_{D^b(B)}(X_i\uparrow ^{\widetilde{N}}\downarrow _N,X_j[t])\cong 
\end{displaymath}
\begin{displaymath}
Hom_{D^b(B)}(X_i^{h_1}\oplus \cdots \oplus X_i^{h_{|H|}},X_j[t])
\end{displaymath}
where $H=\{ h_1,...,h_{|H|}\}$, hence
\begin{displaymath}
Hom_{D^b(\widetilde{B})}(X_i\uparrow ^{\widetilde{N}},X_j\uparrow ^{\widetilde{N}}[t])=0 \textnormal{ if $t<0$ or }i\neq j.
\end{displaymath}
\indent Let $t\in \{ 1,...,m,11,21,...,r1\}$, then by (4), \ref{a12} and the fact that $\widetilde{A}$ and $A$ are linked we know that 
\begin{displaymath}
End_{D^b(\widetilde{B})}(X_t\uparrow ^{\widetilde{N}})\cong End_{D^b(\widetilde{B})/K^b(P_{\widetilde{B}})}(X_t\uparrow ^{\widetilde{N}})\cong End_{D^b(\widetilde{A})/K^b(P_{\widetilde{A}})}(\widetilde{F}^{-1}(X_t\uparrow ^{\widetilde{N}}))
\end{displaymath}
\begin{displaymath}
\cong End_{D^b(\widetilde{A})/K^b(P_{\widetilde{A}})}(s_t\uparrow ^{\widetilde{G}})\cong M_{n_1}(k)\oplus \cdots \oplus M_{n_{q_t}}(k).
\end{displaymath}
Hence in $D^b(\widetilde{B})$ we have 
\begin{displaymath}
X_t\uparrow ^{\widetilde{N}}\cong \underbrace{Y_1^t\oplus \cdots \oplus Y_1^t}_{n_1\textnormal{-terms}}\oplus \underbrace{Y_2^t\oplus \cdots \oplus Y_2^t}_{n_2\textnormal{-terms}}\oplus \cdots \oplus \underbrace{Y_{q_t}^t\oplus \cdots \oplus Y_{q_t}^t}_{n_{q_t}\textnormal{-terms}}
\end{displaymath}
such that 
\begin{displaymath}
Hom_{D^b(\widetilde{B})}(Y_i^{t_1},Y_j^{t_2}[w])=\left\{ \begin{array}{ll}
k & \textrm{if }i=j,\ t_1=t_2\textrm{ and }w=0\\
0 & \textrm{otherwise.}
\end{array} \right.
\end{displaymath}
Moreover, since $X_{ij}\uparrow ^{\widetilde{N}}\cong X_{i1}\uparrow ^{\widetilde{N}}$ for $1\leq i\leq r$ and $1\leq j\leq k_i$ it follows from \ref{b2} that the set $Y_1^1,...,Y_{q_1}^1,...,Y_1^{r1},...,Y_{q_{r1}}^{r1}$ generates $D^b($mod-$\widetilde{B})$ as a thick subcategory and so this set is a cohomologically schurian set of generators.\\
\indent In the stable module category we have the following isomorphisms for $t\in \{ 1,...,m,11,21,...,r1\}$:
\begin{displaymath}
Y_1^t\oplus \cdots \oplus Y_1^t\oplus \cdots \oplus Y_{q_t}^t\oplus \cdots \oplus Y_{q_t}^t\cong F(s_t)\uparrow ^{\widetilde{N}}\cong \widetilde{F}(s_t\uparrow ^{\widetilde{G}})
\end{displaymath}
which is isomorphic to:
\begin{displaymath}
\left\{ \begin{array}{ll}
\widetilde{F}(S_{t1})\oplus \cdots \oplus \widetilde{F}(S_{t1})\oplus \cdots \oplus \widetilde{F}(S_{tk_t})\oplus \cdots \oplus \widetilde{F}(S_{tk_t}) & \textrm{if }t\in \{1,..,m\}\\
\widetilde{F}(\widetilde{S}_{t1})\oplus \cdots \oplus \widetilde{F}(\widetilde{S}_{t1})\oplus \cdots \oplus \widetilde{F}(\widetilde{S}_{tl_t})\oplus \cdots \oplus \widetilde{F}(\widetilde{S}_{tl_t}) & \textrm{if }t\in \{11,..,r1\}
\end{array} \right.
\end{displaymath}
therefore, without loss of generality, we can assume that in the stable module category we have 
\begin{displaymath}
Y_i^t\cong \widetilde{F}(S_{ti}) \textrm{ if }t\in \{1,..,m\}
\end{displaymath}
\begin{displaymath}
Y_i^t\cong \widetilde{F}(\widetilde{S}_{ti}) \textrm{ if }t\in \{11,..,r1\}.
\end{displaymath}
\indent We have now shown that the set $Y_1^1,...,Y_{q_1}^1,...,Y_1^{r1},...,Y_{q_{r1}}^{r1}\in D^b($mod-$\widetilde{B})$ satisfies the conditions of \ref{a2} and so $\widetilde{A}$ and $\widetilde{B}$ are derived equivalent.
\end{proof}
\indent We note here we could have done all of the above in the context of stable equivalences of splendid type and obtained the following.
\begin{corollary} \label{a14} Let $\widetilde{F}:\overline{mod}-\widetilde{A}\rightarrow \overline{mod}-\widetilde{B}$ and $F:\overline{mod}-A\rightarrow \overline{mod}-B$ be stable equivalences of splendid type such that 
\begin{equation}
F(-)\uparrow ^{\widetilde{N}}\cong \widetilde{F}(-\uparrow ^{\widetilde{G}})
\end{equation}
Suppose $X_1,...,X_m,X_{11},...,X_{1k_1},...,X_{r1},...,X_{rk_r}\in D^b(\textnormal{mod}-B)$ is a cohomologically schurian set of generators such that $H$ permutes $X_{i1},...,X_{ik_i}$ for $1\leq i\leq r$ and fixes $X_1,...,X_m$, $X_{ij}\cong F(s_{ij})$ for $1\leq i\leq r$ and $1\leq j\leq k_r$, and $X_i\cong F(s_i)$ for $1\leq i\leq m$. Then there is a splendid derived equivalence between $\widetilde{A}$ and $\widetilde{B}$.
\end{corollary}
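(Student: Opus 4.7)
The plan is to mimic the argument of Theorem \ref{a13} verbatim, with only the final invocation of Rickard's theorem \ref{a2} replaced by Holloway's variant \ref{a6}. Since both $F$ and $\widetilde{F}$ are stable equivalences of splendid type, each is given by tensor product with a splendid bimodule complex $C$, resp. $\widetilde{C}$; in particular, in the stable category $F(s_i)$ is identified with $s_i\otimes_A C$, and analogously for $\widetilde{F}$, so the hypothesis $X_i\cong F(s_i)$ becomes a stable isomorphism $X_i\cong s_i\otimes_A C$. The splendid nature of the bimodule complexes only becomes active at the very last step.

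Next I would form each $X_t\uparrow^{\widetilde{N}}$ and decompose it via primitive idempotents in its endomorphism ring, exactly as in the proof of \ref{a13}. The Mackey computation together with \ref{a12} and the linkedness of $\widetilde{A}$, $A$ (proposition \ref{a11}) gives
\begin{displaymath}
End_{D^b(\widetilde{B})}(X_t\uparrow^{\widetilde{N}})\cong End_{D^b(\widetilde{A})/K^b(P_{\widetilde{A}})}(s_t\uparrow^{\widetilde{G}})\cong M_{n_1}(k)\oplus\cdots\oplus M_{n_{q_t}}(k),
\end{displaymath}
so $X_t\uparrow^{\widetilde{N}}$ decomposes as a multiple of orthogonal indecomposables $Y_1^t,\ldots,Y_{q_t}^t$ with one-dimensional endomorphism rings. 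The same adjunction and Hom calculations supply the vanishing conditions (a) and (b) of \ref{a6} for the full collection $\{Y_i^t\}$, while \ref{b2} yields the generation condition (c). Using the splendid identification $\widetilde{F}(-)\cong -\otimes_{\widetilde{A}}\widetilde{C}$ in the stable category, the isomorphism $Y_1^t\oplus\cdots\oplus Y_{q_t}^t\cong \widetilde{F}(s_t\uparrow^{\widetilde{G}})$ from the proof of \ref{a13} lets us arrange, after relabeling, that each $Y_i^t$ is stably isomorphic to $\widetilde{S}\otimes_{\widetilde{A}}\widetilde{C}$ for the matching simple $\widetilde{A}$-module $\widetilde{S}\in\{S_{ti}\}\cup\{\widetilde{S}_{ti}\}$.

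Finally, I would apply \ref{a6} with input data the splendid stable equivalence coming from $\widetilde{C}$ together with the cohomologically schurian set of generators $\{Y_i^t\}$; the required stable isomorphism $Y_i^t\cong \widetilde{S}\otimes_{\widetilde{A}}\widetilde{C}$ has just been verified. The conclusion of \ref{a6} is then a splendid tilting complex lifting $\widetilde{C}$, which is exactly a splendid derived equivalence between $\widetilde{A}$ and $\widetilde{B}$.

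The main obstacle I anticipate is the identification made in the second paragraph: the proof of \ref{a13} gives stable isomorphisms $Y_i^t\cong \widetilde{F}(S_{ti})$, whereas \ref{a6} demands stable isomorphisms with respect to the specific bimodule complex $\widetilde{C}$. One has to check carefully that the phrase ``stable equivalence of splendid type'' really does furnish, for each simple $\widetilde{A}$-module $\widetilde{S}$, a canonical stable isomorphism $\widetilde{F}(\widetilde{S})\cong \widetilde{S}\otimes_{\widetilde{A}}\widetilde{C}$ compatible with the decomposition above; this should follow from the definition of a splendid stable equivalence as the stable-category functor induced by $-\otimes_{\widetilde{A}}\widetilde{C}$, but it is the only genuinely new verification. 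Granted this compatibility, the entire argument runs in parallel to \ref{a13} with no new computational content.
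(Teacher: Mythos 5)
Your proposal is correct and matches the paper's argument: the paper's proof of this corollary is literally ``the proof is exactly the same as that for \ref{a13}, except the set $Y_1^1,\ldots,Y_{q_{r1}}^{r1}$ satisfies the conditions of \ref{a6} instead of \ref{a2}.'' Your extra remark about identifying $\widetilde{F}$ with $-\otimes_{\widetilde{A}}\widetilde{C}$ so that the stable isomorphisms required by \ref{a6} are with respect to the splendid complex is a sensible point of care that the paper simply leaves implicit.
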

\begin{proof} The proof is exactly the same as that for \ref{a13}, except we find that the the set $Y_1^1,...,Y_{q_1}^1,...,Y_1^{r1},...,Y_{q_{r1}}^{r1}\in D^b($mod-$\widetilde{B})$ satisfies the conditions of \ref{a6} instead of \ref{a2}.
\end{proof}
\indent To finish this section we show that the conditions (4) and (5) in the above theorems are not vacuous.
\begin{lemma} \label{a15} Suppose restriction is a stable equivalence between $\widetilde{A}$ and $\widetilde{B}$ and $A$ and $B$, then in the stable module category
\begin{displaymath}
-\downarrow ^G_{N}\uparrow _{N}^{\widetilde{N}}\cong -\uparrow _G^{\widetilde{G}}\downarrow ^{\widetilde{G}}_{\widetilde{N}}
\end{displaymath}
\end{lemma}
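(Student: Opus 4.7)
The proof is essentially a direct application of Mackey's decomposition formula, and the plan is to establish the isomorphism at the level of honest $k\widetilde{N}$-modules, from which the stable-category statement follows automatically.

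Recall Mackey's formula: for any $kG$-module $M$,
\[
M \uparrow^{\widetilde{G}}_{G} \downarrow^{\widetilde{G}}_{\widetilde{N}} \;\cong\; \bigoplus_{g \in \widetilde{N} \backslash \widetilde{G} / G} \bigl({}^{g}\!M \downarrow^{gGg^{-1}}_{gGg^{-1} \cap \widetilde{N}}\bigr) \uparrow^{\widetilde{N}}_{gGg^{-1} \cap \widetilde{N}}.
\]
The first step is to exploit $G \lhd \widetilde{G}$, which gives $gGg^{-1} = G$ for every $g$, so the conjugated subgroups in the formula all collapse to $G$ itself. The second step is to analyse the double cosets $\widetilde{N} \backslash \widetilde{G} / G$ and the intersection $\widetilde{N} \cap G$. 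Composing the inclusion $\widetilde{N} \hookrightarrow \widetilde{G}$ with the quotient $\widetilde{G} \twoheadrightarrow H = \widetilde{G}/G$ yields a homomorphism $\widetilde{N} \to H$ with kernel $\widetilde{N} \cap G$; the standing hypothesis that this composition identifies with the natural surjection $\widetilde{N} \twoheadrightarrow \widetilde{N}/N \cong H$ forces $\widetilde{N} \cap G = N$ and $\widetilde{G} = G \widetilde{N}$. The latter implies that there is exactly one double coset, represented by the identity.

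Substituting these identifications, Mackey collapses to the single summand $M \downarrow^{G}_{N} \uparrow^{\widetilde{N}}_{N}$, giving the desired isomorphism as genuine $k\widetilde{N}$-modules. This descends to the stable module category: both functors $\overline{\mathrm{mod}}\text{-}A \to \overline{\mathrm{mod}}\text{-}\widetilde{B}$ are well-defined (since, by the stable equivalence hypotheses, any components lying outside $B$ or $\widetilde{B}$ are projective), and projection onto the $\widetilde{B}$-component commutes with the module isomorphism up to projective summands.

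There is no substantive obstacle; the only content is the subgroup-theoretic verification that $\widetilde{N} \cap G = N$ and $\widetilde{G} = G\widetilde{N}$ follow from the setup. These are immediate once one observes that the isomorphism $\widetilde{N}/N \cong \widetilde{G}/G$ is implicitly the one induced by the inclusion $\widetilde{N} \hookrightarrow \widetilde{G}$ — without this, the restriction functors appearing in the lemma would not even be defined.
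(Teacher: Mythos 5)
Your argument is correct, but it is not the route the paper takes. The paper's proof never mentions Mackey: it works entirely inside the stable category, using the lemma's hypothesis twice --- since restriction is a stable equivalence for both pairs, induction is its stable inverse, so $-\uparrow_{\widetilde{N}}^{\widetilde{G}}\downarrow^{\widetilde{G}}_{\widetilde{N}}\cong \mathrm{id}$ and $-\downarrow^G_N\uparrow_N^G\cong \mathrm{id}$ --- and then simply inserts these identities and applies transitivity of induction, $\uparrow_N^{\widetilde{N}}\uparrow_{\widetilde{N}}^{\widetilde{G}}\cong\uparrow_N^{\widetilde{G}}\cong\uparrow_N^G\uparrow_G^{\widetilde{G}}$, to pass from one side to the other. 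Your Mackey computation instead yields the stronger conclusion that $-\downarrow^G_N\uparrow_N^{\widetilde{N}}\cong-\uparrow_G^{\widetilde{G}}\downarrow^{\widetilde{G}}_{\widetilde{N}}$ already as honest $k\widetilde{N}$-modules, with the stable-equivalence hypothesis relegated to the bookkeeping of block components; in exchange you must assume the group-theoretic compatibilities $\widetilde{N}\cap G=N$ and $\widetilde{G}=G\widetilde{N}$, which are not literally part of the paper's stated setup (the paper only posits an abstract isomorphism $\widetilde{G}/G\cong\widetilde{N}/N$, and one can build examples of subgroup configurations where your two identities fail while the restriction functors still make sense), though they do hold in the intended situation $N=N_G(P)$, $\widetilde{N}=N_{\widetilde{G}}(P)$ used in Section 5. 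So your proof buys a module-level, hypothesis-light statement at the cost of an extra (reasonable, but worth stating explicitly) assumption on how $\widetilde{N}$ sits inside $\widetilde{G}$, whereas the paper's proof is a purely formal manipulation that uses nothing beyond the stated stable equivalences and transitivity of induction.
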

\begin{proof} In the stable module category the inverse functor of restriction is induction and so
\begin{displaymath}
-\downarrow ^G_{N}\uparrow _{N}^G\cong id_G(-)
\end{displaymath}
and 
\begin{displaymath}
-\uparrow _{\widetilde{N}}^{\widetilde{G}}\downarrow ^{\widetilde{G}}_{\widetilde{N}}\cong id_{\widetilde{N}}(-)
\end{displaymath}
Therefore we have
\begin{displaymath}
-\downarrow ^G_{N}\uparrow _{N}^{\widetilde{N}}\cong -\downarrow ^G_{N}\uparrow _{N}^{\widetilde{N}}\uparrow _{\widetilde{N}}^{\widetilde{G}}\downarrow ^{\widetilde{G}}_{\widetilde{N}}\cong -\downarrow ^G_{N}\uparrow _{N}^{\widetilde{G}}\downarrow ^{\widetilde{G}}_{\widetilde{N}}\cong  -\downarrow ^G_{N}\uparrow _{N}^G\uparrow _G^{\widetilde{G}}\downarrow ^{\widetilde{G}}_{\widetilde{N}}\cong -\uparrow _G^{\widetilde{G}}\downarrow ^{\widetilde{G}}_{\widetilde{N}}
\end{displaymath}
as required.
\end{proof}
\section{Brou\'e's Conjecture for the Group $^2F_4(2)$}
\indent In this section we combine the results of \S 3 and \S 4 to prove that Brou\'e's conjecture holds for the group $^2F_4(2)$.\\
\indent We have an action of $C_2$ on $^2F_4(2)'$ which permutes the simple modules and in fact we have $^2F_4(2)'\rtimes C_2\cong\ ^2F_4(2)$. If we use the notation of \S 3 then the principal block of $^2F_4(2)$ is linked to $A$, restriction is a stable equivalence between it and its Brauer correspondent and $S_i\downarrow _{N_{^2F_4(2)'}(P)}\uparrow ^{N_{^2F_4(2)}(P)}\cong S_i\uparrow ^{^2F_4(2)}\downarrow _{N_{^2F_4(2)}(P)}$ by \ref{a15}, moreover, $C_2$ permutes the complexes $X_i$. It is now clear that the conditions of \ref{a13} are satisfied and so Brou\'e's conjecture holds for the principal block of $^2F_4(2)$.\\
\indent In fact we can explicitly obtain the complexes satisfying the conditions of \ref{a2} since
\begin{displaymath}
X_1\uparrow ^{N_{^2F_4(2)}(P)}=X'_1\oplus X'_2\\
\end{displaymath}
\begin{displaymath}
X_2\uparrow ^{N_{^2F_4(2)}(P)}=X_3\uparrow ^{N_{^2F_4(2)}(P)}=X'_3\\
\end{displaymath}
\begin{displaymath}
X_4\uparrow ^{N_{^2F_4(2)}(P)}=X'_4\oplus X'_5\\
\end{displaymath}
\begin{displaymath}
X_5\uparrow ^{N_{^2F_4(2)}(P)}=X'_6\oplus X'_7\\
\end{displaymath}
\begin{displaymath}
X_6\uparrow ^{N_{^2F_4(2)}(P)}=X'_8\oplus X'_9\\
\end{displaymath}
\begin{displaymath}
X_7\uparrow ^{N_{^2F_4(2)}(P)}=X_8\uparrow ^{N_{^2F_4(2)}(P)}=X'_{10}\\
\end{displaymath}
\begin{displaymath}
X_9\uparrow ^{N_{^2F_4(2)}(P)}=X'_{11}\oplus X'_{12}\\
\end{displaymath}
\begin{displaymath}
X_{10}\uparrow ^{N_{^2F_4(2)}(P)}=X'_{13}\oplus X'_{14}\\
\end{displaymath}
\begin{displaymath}
X_{11}\uparrow ^{N_{^2F_4(2)}(P)}=X_{12}\uparrow ^{N_{^2F_4(2)}(P)}=X'_{15}\\
\end{displaymath}
\begin{displaymath}
X_{13}\uparrow ^{N_{^2F_4(2)}(P)}=X_{14}\uparrow ^{N_{^2F_4(2)}(P)}=X'_{16}\\
\nonumber
\end{displaymath}
and explicitly the $X'_i$'s are the complexes:
\begin{equation}
\begin{aligned}
X'_1:\cdots &\rightarrow RG_1\\
X'_2:\cdots &\rightarrow RG_2\\
X'_3:\cdots &\rightarrow RG_3\\
X'_4:\cdots &\rightarrow P_{10}\rightarrow P_{13}\rightarrow RG_4\\
X'_5:\cdots &\rightarrow P_{5}\rightarrow P_{11}\rightarrow RG_5\\
X'_6:\cdots &\rightarrow P_9\rightarrow P_{12}\rightarrow P_{12}\oplus P_8\rightarrow P_8\oplus P_{16}\rightarrow P_{16}\oplus P_4\rightarrow P_4\oplus P_{13}\rightarrow RG_6\\
X'_7:\cdots &\rightarrow P_8\rightarrow P_{14}\rightarrow P_{14}\oplus P_9\rightarrow P_9\oplus P_{16}\rightarrow P_{16}\oplus P_3\rightarrow P_3\oplus P_{11}\rightarrow RG_7\\
X'_8:\cdots &\rightarrow P_4\rightarrow P_4\oplus P_{13}\rightarrow P_{10}\oplus P_9\rightarrow RG_8\\
X'_9:\cdots &\rightarrow P_3\rightarrow P_3\oplus P_{11}\rightarrow P_{5}\oplus P_8\rightarrow RG_9\\
X'_{10}:\cdots &\rightarrow P_9\oplus P_8\rightarrow P_{16}\rightarrow P_{16}\rightarrow P_9\oplus P_8\rightarrow P_7\rightarrow RG_{10}\\
X'_{11}:\cdots &\rightarrow P_3\rightarrow P_{11}\rightarrow P_{11}\oplus P_8\rightarrow P_8\oplus P_{15}\rightarrow RG_{11}\\
X'_{12}:\cdots &\rightarrow P_4\rightarrow P_{13}\rightarrow P_{13}\oplus P_9\rightarrow P_9\oplus P_{15}\rightarrow RG_{12}\\
X'_{13}:\cdots &\rightarrow P_8\rightarrow P_7\oplus P_8\rightarrow P_7\oplus P_{14}\rightarrow P_{14}\oplus P_{12}\rightarrow P_{12}\oplus P_{16}\rightarrow RG_{13}\\
X'_{14}:\cdots &\rightarrow P_9\rightarrow P_9\oplus P_7\rightarrow P_7\oplus P_{12}\rightarrow P_{12}\oplus P_{14}\rightarrow P_{14}\oplus P_{16}\rightarrow RG_{14}\\
X'_{15}:\cdots &\rightarrow P_{12}\oplus P_{14}\rightarrow P_7\oplus P_8\oplus P_9\oplus P_{16}\rightarrow P_{16}\oplus P_{16}\rightarrow \\
&\rightarrow P_{12}\oplus P_{14}\oplus P_3\oplus P_4\rightarrow P_7\oplus P_{11}\oplus P_{13}\rightarrow P_{10}\oplus P_{16}\oplus P_5\rightarrow RG_{15}\\
X'_{16}:\cdots &\rightarrow P_9\oplus P_8\rightarrow P_{16}\rightarrow P_{16}\rightarrow P_9\oplus P_8\rightarrow P_{15}\oplus P_7\rightarrow P_{12}\oplus P_{14}\rightarrow RG_{16}\\
\end{aligned}
\nonumber
\end{equation}
\indent Using the Loewy layers of the homology of these complexes, contained in appendix B, and using Holloway's program \cite{mh01} it is easy to see that these complexes do indeed satisfy the conditions of Rickard's theorem.  
\begin{theorem} Brou\'e's conjecture holds for $^2F_4(2)$.
\end{theorem}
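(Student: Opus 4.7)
The plan is to reduce to the principal block at $p=5$ and then apply Theorem \ref{a13}. First I would carry out the same reductions as at the start of \S 3: using GAP one checks that the non-principal blocks of $k\,^2F_4(2)$ have trivial defect groups, the Sylow $2$- and $3$-subgroups are non-abelian, and the Sylow $13$-subgroup is cyclic (so it is handled by \cite{rr98}). The only remaining case is the principal block at $p=5$, whose defect group is again $P\cong C_5\times C_5$.

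For the application of Theorem \ref{a13}, I would set $\widetilde{G}={}^2F_4(2)$, $G={}^2F_4(2)'$, $\widetilde{N}=N_{\widetilde{G}}(P)$, $N=N_G(P)$ and $H=\widetilde{G}/G\cong C_2$. Since $|H|=2$ is coprime to $p=5$, $H$ is a $p'$-group, so by Proposition \ref{a11} the principal block $\widetilde{A}$ of $k\widetilde{G}$ is linked to $A$ and the principal block $\widetilde{B}$ of $k\widetilde{N}$ is linked to $B$, with $\widetilde{A}\cong A\uparrow^{\widetilde{G}}$ and $\widetilde{B}\cong B\uparrow^{\widetilde{N}}$. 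Restriction gives stable equivalences of Morita type $F$ and $\widetilde{F}$ between $A,B$ and $\widetilde{A},\widetilde{B}$ respectively, because $C_{\widetilde{G}}(Q)=C_{\widetilde{N}}(Q)$ for every non-trivial $Q\leq P$ (the same reason the restriction equivalence for $A,B$ worked in \S 3), and the compatibility $F(-)\uparrow^{\widetilde{N}}\cong\widetilde{F}(-\uparrow^{\widetilde{G}})$ is exactly Lemma \ref{a15}.

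It then remains to supply a cohomologically schurian set of generators of $D^b(\textnormal{mod-}B)$ that is permuted by $H=C_2$ in the way required by Theorem \ref{a13}. I would use the fourteen complexes $X_1,\dots,X_{14}$ constructed in \S 3, and check via GAP and the known action of the outer automorphism on the simple $A$-modules that $C_2$ fixes each $X_i$ whose index labels a $C_2$-stable simple and swaps the remaining $X_i$'s in pairs; the orbit structure $\{X_1\},\{X_2,X_3\},\{X_4\},\{X_5\},\{X_6\},\{X_7,X_8\},\{X_9\},\{X_{10}\},\{X_{11},X_{12}\},\{X_{13},X_{14}\}$ is already visible in the induction formulas listed in this section. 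The identifications $X_i\cong F(S_i)$ hold by construction, since $F$ is restriction and each $X_i$ is stably isomorphic to $S_i\otimes_A C$.

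The main obstacle is checking that the $C_2$-action on the $X_i$'s is genuine rather than merely up to isomorphism in $K^b(B)$: each singleton orbit must be represented by a $C_2$-stable complex and each two-element orbit must consist of complexes genuinely interchanged by $C_2$. This is exactly what the uniqueness argument at the end of \S 3 delivers, since the $X_i$'s are determined up to isomorphism in $K^b(B)$ by the data in Appendix B, so any $C_2$-equivariant choice of terms automatically gives a $C_2$-compatible family of complexes up to isomorphism. Once this is verified, Theorem \ref{a13} produces the desired derived equivalence between $\widetilde{A}$ and $\widetilde{B}$, proving Brou\'e's conjecture for $^2F_4(2)$.
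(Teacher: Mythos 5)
Your proposal is correct and follows essentially the same route as the paper: reduce via GAP to the principal block in characteristic $5$ (all other blocks having trivial, cyclic, or non-abelian defect groups), then apply Theorem \ref{a13} with $\widetilde{G}={}^2F_4(2)$, $G={}^2F_4(2)'$, $H\cong C_2$ a $p'$-group, restriction as the stable equivalences, Lemma \ref{a15} for the compatibility $F(-)\uparrow^{\widetilde{N}}\cong\widetilde{F}(-\uparrow^{\widetilde{G}})$, and the fourteen complexes $X_i$ of \S 3 permuted by $C_2$. The paper additionally writes down the induced complexes $X'_i$ explicitly and re-verifies Rickard's conditions with Holloway's program, but that is supplementary to the argument you give; your extra care about the $C_2$-action on the $X_i$ being realized on genuine complexes (via the uniqueness-of-differentials argument at the end of \S 3) is consistent with, and if anything slightly more explicit than, the paper's treatment.
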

\begin{proof} The order of $^2F_4(2)$ is $35942400=2^{12}3^35^213$, and so we only have to consider $k^2F_4(2)$ for a field $k$ of characteristic 2,3,5 or 13. It turns out, and this can easily be checked using GAP \cite{gap}, that the only block of $k^2F_4(2)$ in any characteristic with a non-cyclic abelian defect group is the principal block of $k^2F_4(2)$ in characteristic $5$, hence this is the only case we have to prove and this is exactly what we have done above.
\end{proof}
\appendix
\section{Green Correspondents}
\indent Here we denote the Green correspondent of $S_i$ by $RG_i$ and the projective cover of $S_i$ by $P_i$.\\
\\
$\mathbf{^2F_4(2)'}$\\
\\
\indent The summands of $S_i\downarrow _N$, for the simple $^2F_4(2)'$-modules are listed below, from this the reader could recreate the ordering of the simple $N$-modules used in this paper.\\
\\
$S_1\downarrow _N=RG_1$\\
$S_2\downarrow _N=RG_2\oplus P_6$\\
$S_3\downarrow _N=RG_3\oplus P_4$\\
$S_4\downarrow _N=RG_4$\\
$S_5\downarrow _N=RG_5$\\
$S_6\downarrow _N=RG_6\oplus P_3\oplus P_5$\\
$S_7\downarrow _N=RG_7\oplus P_{13}\oplus P_1$\\
$S_8\downarrow _N=RG_8\oplus P_{13}\oplus P_1$\\
$S_9\downarrow _N=RG_9\oplus P_{14}\oplus P_7\oplus P_{10}\oplus P_{11}\oplus P_{12}$\\
$S_{10}\downarrow _N=RG_{10}\oplus P_{14}\oplus P_8\oplus P_7\oplus P_9\oplus P_{11}$\\
$S_{11}\downarrow _N=RG_{11}\oplus P_{14}\oplus P_{14}\oplus P_{13}\oplus P_9\oplus P_{12}\oplus P_2\oplus P_4\oplus P_6$\\
$S_{12}\downarrow _N=RG_{12}\oplus P_{14}\oplus P_{14}\oplus P_{13}\oplus P_8\oplus P_{10}\oplus P_2\oplus P_4\oplus P_6$\\
$S_{13}\downarrow _N=RG_{13}\oplus P_{14}\oplus P_{13}\oplus P_{13}\oplus P_7\oplus P_8\oplus P_9\oplus P_{10}\oplus P_{11}\oplus P_{12}\oplus P_3\oplus P_5$\\
$S_{14}\downarrow _N=RG_{14}\oplus P_{14}\oplus P_{13}\oplus P_{13}\oplus P_7\oplus P_8\oplus P_9\oplus P_{10}\oplus P_{11}\oplus P_{12}\oplus P_3\oplus P_5$\\
\\
\indent The Loewy structures of the Green correspondents $RG_i$ are shown below.\\
\\
\begin{displaymath}
\begin{array}{cc}
RG_1=&1,\\
\end{array}
\
\begin{array}{cc}
RG_2=&4,\\
\end{array}
\
\begin{array}{cc}
RG_3=&6,\\
\end{array}
\
\begin{array}{cc}
&14\\
&12\ 10\\
&4\ 6\ 13\\
RG_4=&8\ 7\ 9\\
&14\ 14\\
&11\\
&2\\
\end{array}
\end{displaymath}
\begin{displaymath}
\begin{array}{cc}
&2\ 14\\
&11\ 7\\
&14\\
RG_5=&10\ 12\\
&6\ 4\ 13\\
&9\ 8\\
&14\\
\end{array}
\
\begin{array}{cc}
&7\ 11\\
&1\ 13\\
&9\ 8\\
RG_6=&3\ 5\ 14\\
&12\ 11\ 10\\
&13\\
&7\\
\end{array}
\
\begin{array}{cc}
&3\\
&10\\
RG_7=&13\\
&8\\
&5\\
\end{array}
\
\begin{array}{cc}
&5\\
&12\\
RG_8=&13\\
&9\\
&3\\
\end{array}
\end{displaymath}
\begin{displaymath}
\begin{array}{cc}
&7\ 8\ 9\\
&3\ 5\ 1\ 14\ 14\\
&11\ 11\ 12\ 10\ 12\ 10\\
RG_9=&6\ 4\ 2\ 13\ 13\ 13\ 13\\
&9\ 7\ 9\ 8\ 7\ 8\ 8\ 9\\
&5\ 3\ 1\ 14\ 14\\
&11\ 10\ 12\\
&13\\
\end{array}
\
\begin{array}{cc}
&10\ 12\ 13\\
&8\ 7\ 9\\
&3\ 1\ 5\ 14\ 14\\
RG_{10}=&11\ 12\ 12\ 10\ 11\ 10\\
&4\ 2\ 6\ 13\ 13\ 13\ 13\\
&9\ 7\ 8\ 9\ 8\ 7\\
&1\ 5\ 3\ 14\ 14\\
&11\ 12\ 10\\
\end{array}
\end{displaymath}
\begin{displaymath}
\begin{array}{cc}
&11\ 10\\
&2\ 4\ 13\ 13\\
&7\ 9\ 9\ 8\ 8\ 7\\
RG_{11}=&3\ 5\ 1\ 14\ 14\ 14\\
&11\ 12\ 10\ 12\ 10\ 11\\
&2\ 6\ 13\ 13\\
&7\ 8\\
\end{array}
\
\begin{array}{cc}
&11\ 12\\
&6\ 2\ 13\ 13\\
&8\ 7\ 7\ 9\ 8\ 9\\
RG_{12}=&3\ 1\ 5\ 14\ 14\ 14\\
&10\ 11\ 11\ 12\ 12\ 10\\
&4\ 2\ 13\ 13\\
&7\ 9\\
\end{array}
\end{displaymath}
\begin{displaymath}
\begin{array}{cc}
&13\\
&7\ 8\\
RG_{13}=&1\ 14\\
&10\ 11\\
&13\\
\end{array}
\
\begin{array}{cc}
&13\\
&9\ 7\\
RG_{14}=&1\ 14\\
&12\ 11\\
&13\\
\end{array}
\end{displaymath}
\\ 
$\mathbf{^2F_4(2)}$\\
\\
\indent The summands of $S_i\downarrow _N$, for the simple $^2F_4(2)$-modules are listed below, from this the reader could recreate the ordering of the simple $N$-modules used in this paper.\\
\\
$S_1\downarrow _N=RG_1$\\
$S_2\downarrow _N=RG_2$\\
$S_3\downarrow _N=RG_3\oplus P_6$\\
$S_4\downarrow _N=RG_4$\\
$S_5\downarrow _N=RG_5$\\
$S_6\downarrow _N=RG_6$\\
$S_7\downarrow _N=RG_7$\\
$S_8\downarrow _N=RG_8\oplus P_7$\\
$S_9\downarrow _N=RG_9\oplus P_7$\\
$S_{10}\downarrow _N=RG_{10}\oplus P_{14}\oplus P_{12}\oplus P_2\oplus P_1$\\
$S_{11}\downarrow _N=RG_{11}\oplus P_{16}\oplus P_{11}\oplus P_{10}\oplus P_9$\\
$S_{12}\downarrow _N=RG_{12}\oplus P_{16}\oplus P_{13}\oplus P_8\oplus P_5$\\
$S_{13}\downarrow _N=RG_{13}\oplus P_{15}\oplus P_{11}\oplus P_{10}\oplus P_9$\\
$S_{14}\downarrow _N=RG_{14}\oplus P_{15}\oplus P_{13}\oplus P_8\oplus P_5$\\
$S_{15}\downarrow _N=RG_{15}\oplus P_{16}\oplus P_{15}\oplus P_{14}\oplus 2.P_{13}\oplus P_{12}\oplus 2.P_{11}\oplus 2.P_6\oplus P_4\oplus P_3$\\
$S_{16}\downarrow _N=RG_{16}\oplus 2.P_{16}\oplus 2.P_{15}\oplus 2.P_{14}\oplus P_{13}\oplus 2.P_{12}\oplus P_{11}\oplus P_{10}\oplus P_9\oplus P_8\oplus 2.P_7\oplus P_5$\\
\\
\indent The Loewy structures of the Green correspondents $RG_i$ are shown below.\\
\\
\begin{displaymath}
\begin{array}{cc}
&\\
&\\
&\\
RG_1=&1\\
&\\
&\\
&\\
\end{array}
\
\begin{array}{cc}
&\\
&\\
&\\
RG_2=&2\\
&\\
&\\
&\\
\end{array}
\
\begin{array}{cc}
&\\
&\\
&\\
RG_3=&6\\
&\\
&\\
&\\
\end{array}
\
\begin{array}{cc}
&13\\
&16\\
&6\ 12\\
RG_4=&9\ 15\\
&13\ 11\\
&5\\
&3\\
\end{array}
\
\begin{array}{cc}
&11\\
&16\\
&6\ 14\\
RG_5=&8\ 15\\
&13\ 11\\
&10\\
&4\\
\end{array}
\end{displaymath}
\begin{displaymath}
\begin{array}{cc}
&4\ 13\\
&8\ 10\\
&11\\
RG_6=&16\\
&6\ 14\\
&15\\
&13\\
\end{array}
\
\begin{array}{cc}
&3\ 11\\
&9\ 5\\
&13\\
RG_7=&16\\
&6\ 12\\
&15\\
&11\\
\end{array}
\
\begin{array}{cc}
&9\ 10\\
&1\ 14\\
&15\\
RG_8=&13\ 7\\
&10\ 16\\
&12\\
&9\\
\end{array}
\
\begin{array}{cc}
&5\ 8\\
&2\ 12\\
&15\\
RG_9=&11\ 7\\
&5\ 16\\
&14\\
&8\\
\end{array}
\end{displaymath}
\begin{displaymath}
\begin{array}{cc}
&\\
&7\\
&16\\
RG_{10}=&12\ 14\\
&15\\
&7\\
&\\
&\\
\end{array}
\
\begin{array}{cc}
&8\ 15\\
&2\ 7\ 13\ 11\\
&5\ 10\ 16\ 16\\
RG_{11}=&4\ 6\ 12\ 14\ 12\ 14\\
&9\ 8\ 15\ 15\ 15\\
&2\ 7\ 11\ 13\\
&5\ 16\\
&12\\
\end{array}
\
\begin{array}{cc}
&9\ 15\\
&1\ 7\ 13\ 11\\
&5\ 10\ 16\ 16\\
RG_{12}=&3\ 6\ 12\ 14\ 14\ 12\\
&9\ 8\ 15\ 15\ 15\\
&2\ 7\ 11\ 13\\
&5\ 16\\
&12\\
\end{array}
\end{displaymath}
\begin{displaymath}
\begin{array}{cc}
&12\ 16\\
&9\ 15\\
&1\ 7\ 13\ 11\\
RG_{13}=&5\ 10\ 16\ 16\\
&3\ 6\ 14\ 12\ 14\ 12\\
&9\ 8\ 15\ 15\\
&2\ 7\ 13\ 11\\
&5\ 16\\
\end{array}
\
\begin{array}{cc}
&14\ 16\\
&8\ 15\\
&2\ 7\ 13\ 11\\
RG_{14}=&5\ 10\ 16\ 16\\
&3\ 6\ 14\ 12\ 14\ 12\\
&9\ 8\ 15\ 15\\
&1\ 7\ 13\ 11\\
&10\ 16\\
\end{array}
\end{displaymath}
\begin{displaymath}
\begin{array}{cc}
&5\ 10\ 16\\
&4\ 3\ 6\ 14\ 12\ 14\ 12\\
&8\ 9\ 8\ 9\ 15\ 15\ 15\ 15\\
RG_{15}&1\ 2\ 7\ 7\ 13\ 11\ 11\ 13\ 13\ 11\\
&10\ 10\ 5\ 5\ 16\ 16\ 16\ 16\\
&4\ 3\ 6\ 12\ 14\ 12\ 14\\
&9\ 8\ 15\\
\end{array}
\
\begin{array}{cc}
&\\
&14\ 12\\
&8\ 9\ 15\\
RG_{16}&2\ 1\ 13\ 11\\
&5\ 10\ 16\\
&14\ 12\\
&\\
\end{array}
\end{displaymath}
\section{Cohomology Groups of the $X_i's$}
$\mathbf{^2F_4(2)'}$\\
\\
\indent Here we display the Loewy structures of the homology of $X_i$ for $1\leq i\leq 14$. The reader should refer to this when reading lemma 3.2. Also, if one wanted to recreate the complexes $X_i$ then this information would be sufficient to produce the maps I have used.\\
\\
\begin{displaymath}
\begin{array}{cc}
H^0(X_1)=&1,\\
\end{array}
\
\begin{array}{cc}
H^0(X_2)=&4,\\
\end{array}
\
\begin{array}{cc}
H^0(X_3)=&6,\\
\end{array}
\end{displaymath}
\begin{displaymath}
\begin{array}{cc}
&1\\
H^{-2}(X_4)=&11,\\
\end{array}
\
\begin{array}{cc}
&\\
H^{-1}(X_4)=&1,\\
\end{array}
\end{displaymath}
\begin{displaymath}
\begin{array}{cc}
&\\
&2\\
H^{-6}(X_5)=&7,\\
&\\
&\\
\end{array}
\
\begin{array}{cc}
&\\
&\\
H^{-5}(X_5)=&1\ 2,\\
&\\
&\\
\end{array}
\
\begin{array}{cc}
&6\ 1\ 4\\
&9\ 8\\
H^{-4}(X_5)=&14,\\
&11\\
&2\\
\end{array}
\
\begin{array}{cc}
&4\ 6\\
&9\ 8\\
H^{-3}(X_5)=&14,\\
&11\\
&2\\
\end{array}
\end{displaymath}
\begin{displaymath}
\begin{array}{cc}
&11\\
H^{-3}(X_6)=&2,\\
&\\
\end{array}
\
\begin{array}{cc}
&14\\
H^{-2}(X_6)=&11,\\
&2\\
\end{array}
\end{displaymath}
\begin{displaymath}
\begin{array}{cc}
&6\ 9\\
&3\ 8\\
&14\\
H^{-5}(X_7)=&10\ 11,\\
&2\ 13\\
&7\\
\end{array}
\
\begin{array}{cc}
&\\
&\\
&11\\
H^{-4}(X_7)=&2,\\
&\\
&\\
\end{array}
\
\begin{array}{cc}
&\\
&\\
&1\\
H^{-3}(X_7)=&11,\\
&\\
&\\
\end{array}
\
\begin{array}{cc}
&\\
&\\
&6\\
H^{-2}(X_7)=&8,\\
&\\
&\\
\end{array}
\end{displaymath}
\begin{displaymath}
\begin{array}{cc}
&4\ 8\\
&5\ 9\\
&14\\
H^{-5}(X_8)=&11\ 12,\\
&2\ 13\\
&7\\
\end{array}
\
\begin{array}{cc}
&\\
&\\
&11\\
H^{-4}(X_8)=&2,\\
&\\
&\\
\end{array}
\
\begin{array}{cc}
&\\
&\\
&1\\
H^{-3}(X_8)=&11,\\
&\\
&\\
\end{array}
\
\begin{array}{cc}
&\\
&\\
&4\\
H^{-2}(X_8)=&9,\\
&\\
&\\
\end{array}
\end{displaymath}
\begin{displaymath}
\begin{array}{cc}
&11\\
H^{-4}(X_9)=&2,\\
\end{array}
\
\begin{array}{cc}
&1\\
H^{-3}(X_9)=&11,\\
\end{array}
\
\begin{array}{cc}
&\\
H^{-2}(X_9)=&1,\\
\end{array}
\end{displaymath}
\begin{displaymath}
\begin{array}{cc}
&4\ 6\\
&9\ 8\\
H^{-5}(X_{10})=&14\ 14,\\
&10\ 11\ 12\\
&2\ 13\\
&7\\
\end{array}
\
\begin{array}{cc}
&\\
&\\
H^{-4}(X_{10})=&14,\\
&11\\
&2\\
&\\
\end{array}
\end{displaymath}
\begin{displaymath}
\begin{array}{cc}
&14\\
H^{-3}(X_{10})=&11,\\
&2\\
\end{array}
\
\begin{array}{cc}
&\\
H^{-2}(X_{10})=&4\ 6,\\
&\\
\end{array}
\
\begin{array}{cc}
&\\
H^{-1}(X_{10})=&4\ 6,\\
&\\
\end{array}
\end{displaymath}
\begin{displaymath}
\begin{array}{cc}
&1\ 4\\
&9\\
H^{-6}(X_{11})=&14,\\
&11\ 12\\
&13\\
\end{array}
\
\begin{array}{cc}
&4\ 1\\
&9\ 11\\
H^{-5}(X_{11})=&14,\\
&11\\
&2\\
\end{array}
\
\begin{array}{cc}
&8\\
&14\\
H^{-4}(X_{11})=&11,\\
&2\\
&\\
\end{array}
\end{displaymath}
\begin{displaymath}
\begin{array}{cc}
&1\ 6\\
&8\\
H^{-6}(X_{12})=&14,\\
&11\ 10\\
&13\\
\end{array}
\
\begin{array}{cc}
&6\ 1\\
&8\ 11\\
H^{-5}(X_{12})=&14,\\
&11\\
&2\\
\end{array}
\
\begin{array}{cc}
&9\\
&14\\
H^{-4}(X_{12})=&11,\\
&2\\
&\\
\end{array}
\end{displaymath}

\begin{displaymath}
\begin{array}{cc}
&4\ 8\\
&5\ 9\\
&14\\
H^{-6}(X_{13})=&12\ 11,\\
&2\ 13\\
&7\\
\end{array}
\
\begin{array}{cc}
&\\
&\\
&11\\
H^{-5}(X_{13})=&2,\\
&\\
&\\
\end{array}
\
\begin{array}{cc}
&\\
&\\
&1\\
H^{-4}(X_{13})=&11,\\
&\\
&\\
\end{array}
\
\begin{array}{cc}
&\\
&\\
&\\
H^{-1}(X_{13})=&4,\\
&\\
&\\
\end{array}
\end{displaymath}
\begin{displaymath}
\begin{array}{cc}
&6\ 9\\
&3\ 8\\
&14\\
H^{-6}(X_{14})=&10\ 11,\\
&2\ 13\\
&7\\
\end{array}
\
\begin{array}{cc}
&\\
&\\
&11\\
H^{-5}(X_{14})=&2,\\
&\\
&\\
\end{array}
\
\begin{array}{cc}
&\\
&\\
&1\\
H^{-4}(X_{14})=&11,\\
&\\
&\\
\end{array}
\
\begin{array}{cc}
&\\
&\\
&\\
H^{-1}(X_{14})=&6\\
&\\
&\\
\end{array}
\end{displaymath}
\indent We also have the following short exact sequences, the reader should also refer to this when reading lemma 3.2:
\renewcommand{\theequation}{B.\arabic{equation}}
\setcounter{equation}{0}
\begin{equation}
0\rightarrow 11\rightarrow H^{-2}(X_4)\rightarrow H^0(X_1)\rightarrow 0
\end{equation}
\begin{equation}
0\rightarrow 2\rightarrow H^{-4}(X_9)\rightarrow 11\rightarrow 0
\end{equation}
\begin{equation}
0\rightarrow H^{-4}(X_9)\rightarrow H^{-2}(X_6)\rightarrow 14\rightarrow 0
\end{equation}
\begin{equation}
0\rightarrow 8\rightarrow H^{-2}(X_7)\rightarrow 6\rightarrow 0
\end{equation}
\begin{equation}
0\rightarrow 9\rightarrow H^{-2}(X_8)\rightarrow 4\rightarrow 0
\end{equation}
\begin{equation}
0\rightarrow 7\rightarrow H^{-6}(X_5)\rightarrow 2\rightarrow 0
\end{equation}
\begin{equation}
0\rightarrow H^{-6}(X_5)\rightarrow H^{-5}(X_7)\rightarrow N_1\rightarrow 0
\end{equation}
\begin{equation}
0\rightarrow N_2\rightarrow H^{-6}(X_{12})\rightarrow 1\rightarrow 0
\end{equation}
\begin{equation}
0\rightarrow N_2\rightarrow N_1\rightarrow N_3\rightarrow 0
\end{equation}
\begin{equation}
0\rightarrow H^{-6}(X_5)\rightarrow H^{-5}(X_8)\rightarrow N_4\rightarrow 0
\end{equation}
\begin{equation}
0\rightarrow N_5\rightarrow H^{-6}(X_{11})\rightarrow 1\rightarrow 0
\end{equation}
\begin{equation}
0\rightarrow N_5\rightarrow N_4\rightarrow N_6\rightarrow 0
\end{equation}
\begin{equation}
0\rightarrow N_7\rightarrow H^{-5}(X_7)\rightarrow N_3\rightarrow 0
\end{equation}
\begin{equation}
0\rightarrow N_7\rightarrow H^{-5}(X_{10})\rightarrow N_8\rightarrow 0
\end{equation}
where the loewy structures of the $N_i$'s are:
\begin{displaymath}
\begin{array}{cc}
&6\ 9\\
&3\ 8\\
N_1=&14,\\
&10\ 11\\
&13\\
\end{array}
\
\begin{array}{cc}
&6\\
&8\\
N_2=&14,\\
&10\ 11\\
&13\\
\end{array}
\
\begin{array}{cc}
&\\
&\\
N_3=&9,\\
&3\\
&\\
\end{array}
\
\begin{array}{cc}
&4\ 8\\
&5\ 9\\
N_4=&14,\\
&12\ 11\\
&13\\
\end{array}
\end{displaymath}
\begin{displaymath}
\begin{array}{cc}
&4\\
&9\\
N_5=&14,\\
&12\ 11\\
&13\\
&\\
\end{array}
\
\begin{array}{cc}
&\\
&\\
N_6=&8,\\
&5\\
&\\
&\\
\end{array}
\
\begin{array}{cc}
&6\\
&8\\
N_7=&14,\\
&10\ 11\\
&2\ 13\\
&7\\
\end{array}
\
\begin{array}{cc}
&\\
&4\\
N_8=&9,\\
&14\\
&12\\
&\\
\end{array}
\end{displaymath}
\\
$\mathbf{^2F_4(2)}$\\
\\
\indent Here we display the Loewy structures of the homology of $X_i$ for $1\leq i\leq 16$. The reader should refer to this when reading \S 5. Also, if one wanted to recreate the complexes $X_i$ then this information would be sufficient to produce the maps I have used.\\
\\
\begin{displaymath}
\begin{array}{cc}
H^0(X_1)=&1\\
\end{array}
\
\begin{array}{cc}
H^0(X_2)=&2\\
\end{array}
\
\begin{array}{cc}
H^0(X_3)=&6\\
\end{array}
\end{displaymath}
\begin{displaymath}
\begin{array}{cc}
H^{-2}(X_4)=&1\\
&10\\
\end{array}
\
\begin{array}{cc}
H^{-1}(X_4)=&1\\
&\\
\end{array}
\
\begin{array}{cc}
H^{-2}(X_5)=&2\\
&5\\
\end{array}
\
\begin{array}{cc}
H^{-1}(X_5)=&2\\
&\\
\end{array}
\end{displaymath}
\begin{displaymath}
\begin{array}{cc}
&\\
&\\
H^{-6}(X_6)=&3\\
&9\\
&\\
\end{array}
\
\begin{array}{cc}
&\\
&\\
H^{-5}(X_6)=&2\ 3\\
&\\
&\\
\end{array}
\
\begin{array}{cc}
&2\ 6\\
&15\\
H^{-4}(X_6)=&13\\
&10\\
&4\\
\end{array}
\
\begin{array}{cc}
&6\\
&15\\
H^{-3}(X_6)=&13\\
&10\\
&4\\
\end{array}
\end{displaymath}
\begin{displaymath}
\begin{array}{cc}
&\\
&\\
H^{-6}(X_7)=&4\\
&8\\
&\\
\end{array}
\
\begin{array}{cc}
&\\
&\\
H^{-5}(X_7)=&1\ 4\\
&\\
&\\
\end{array}
\
\begin{array}{cc}
&1\ 6\\
&15\\
H^{-4}(X_7)=&11\\
&5\\
&3\\
\end{array}
\
\begin{array}{cc}
&6\\
&15\\
H^{-3}(X_7)=&11\\
&5\\
&3\\
\end{array}
\end{displaymath}
\begin{displaymath}
\begin{array}{cc}
&\\
H^{-3}(X_8)=&10\\
&4\\
\end{array}
\
\begin{array}{cc}
&13\\
H^{-2}(X_8)=&10\\
&4\\
\end{array}
\
\begin{array}{cc}
&\\
H^{-3}(X_9)=&5\\
&3\\
\end{array}
\
\begin{array}{cc}
&11\\
H^{-2}(X_9)=&5\\
&3\\
\end{array}
\end{displaymath}
\begin{displaymath}
\begin{array}{cc}
&6\ 15\\
&15\ 7\\
H^{-5}(X_{10})=&13\ 11\\
&5\ 10\ 16\\
&4\ 3\ 12\ 14\\
&8\ 9\\
\end{array}
\end{displaymath}
\begin{displaymath}
\begin{array}{cc}
&\\
&\\
H^{-4}(X_{10})=&5\ 10\\
&3\ 4\\
&\\
&\\
\end{array}
\
\begin{array}{cc}
&\\
&\\
H^{-3}(X_{10})=&1\ 2\\
&5\ 10\\
&\\
&\\
\end{array}
\
\begin{array}{cc}
&\\
&\\
H^{-2}(X_{10})=&6\\
&15\\
&\\
&\\
\end{array}
\end{displaymath}
\begin{displaymath}
\begin{array}{cc}
H^{-4}(X_{11})=&5\\
&3\\
\end{array}
\
\begin{array}{cc}
H^{-3}(X_{11})=&2\\
&5\\
\end{array}
\
\begin{array}{cc}
H^{-2}(X_{11})=&2\\
&\\
\end{array}
\end{displaymath}
\begin{displaymath}
\begin{array}{cc}
H^{-4}(X_{12})=&10\\
&4\\
\end{array}
\
\begin{array}{cc}
H^{-3}(X_{12})=&1\\
&10\\
\end{array}
\
\begin{array}{cc}
H^{-2}(X_{12})=&1\\
&\\
\end{array}
\end{displaymath}
\begin{displaymath}
\begin{array}{cc}
&6\\
&15\\
H^{-5}(X_{13})=&11\ 13\\
&10\ 16\\
&4\ 14\\
&8\\
\end{array}
\end{displaymath}
\begin{displaymath}
\begin{array}{cc}
&\\
&13\\
H^{-4}(X_{13})=&10\\
&4\\
&\\
&\\
\end{array}
\
\begin{array}{cc}
&\\
&13\\
H^{-3}(X_{13})=&10\\
&4\\
&\\
&\\
\end{array}
\
\begin{array}{cc}
&\\
&\\
H^{-2}(X_{13})=&6\\
&\\
&\\
&\\
\end{array}
\
\begin{array}{cc}
&\\
&\\
H^{-1}(X_{13})=&6\\
&\\
&\\
&\\
\end{array}
\end{displaymath}
\begin{displaymath}
\begin{array}{cc}
&6\\
&15\\
H^{-5}(X_{14})=&11\ 13\\
&5\ 16\\
&3\ 12\\
&9\\
\end{array}
\end{displaymath}
\begin{displaymath}
\begin{array}{cc}
&\\
&11\\
H^{-4}(X_{14})=&5\\
&3\\
&\\
&\\
\end{array}
\
\begin{array}{cc}
&\\
&11\\
H^{-3}(X_{14})=&5\\
&3\\
&\\
&\\
\end{array}
\
\begin{array}{cc}
&\\
&\\
H^{-2}(X_{14})=&6\\
&\\
&\\
&\\
\end{array}
\
\begin{array}{cc}
&\\
&\\
H^{-1}(X_{14})=&6\\
&\\
&\\
&\\
\end{array}
\end{displaymath}
\begin{displaymath}
\begin{array}{cc}
&2\ 1\ 6\\
&15\\
H^{-6}(X_{15})=&11\ 13\\
&10\ 5\ 16\\
&14\ 12\\
\end{array}
\
\begin{array}{cc}
&1\ 2\ 6\\
&10\ 5\ 15\\
H^{-5}(X_{15})=&11\ 13\\
&10\ 5\\
&3\ 4\\
\end{array}
\
\begin{array}{cc}
&\\
&15\\
H^{-4}(X_{15})=&13\ 11\\
&10\ 5\\
&3\ 4\\
\end{array}
\end{displaymath}
\begin{displaymath}
\begin{array}{cc}
&6\ 15\\
&15\ 7\\
H^{-6}(X_{16})=&11\ 13\\
&5\ 10\ 16\\
&3\ 4\ 12\ 14\\
&9\ 8\\
\end{array}
\
\begin{array}{cc}
&\\
&\\
H^{-5}(X_{16})=&5\ 10\\
&3\ 4\\
&\\
&\\
\end{array}
\
\begin{array}{cc}
&\\
&\\
H^{-4}(X_{16})=&1\ 2\\
&5\ 10\\
&\\
&\\
\end{array}
\
\begin{array}{cc}
&\\
&\\
H^{-1}(X_{16})=&6\\
&\\
&\\
&\\
\end{array}
\end{displaymath}

\end{document}